\newtheorem{theorem}{Theorem}
\newtheorem{lemma}[theorem]{Lemma}
\newtheorem{proposition}[theorem]{Proposition}
\newtheorem{corollary}[theorem]{Corollary}
\numberwithin{equation}{section}
\def\span{\operatorname{span}}
\newcommand{\C}{\ensuremath{\mathbb C}\xspace}
\renewcommand{\j}{\ensuremath{\mathfrak j}}
\newcommand{\h}{\ensuremath{\mathfrak{h}}}
\newcommand{\Z}{\ensuremath{\mathbb{Z}}\xspace}
\newcommand{\N}{\ensuremath{\mathbb{N}}\xspace}
\newcommand{\W}{\ensuremath{\mathcal{W}}\xspace}
\renewcommand{\phi}{\varphi}
\renewcommand{\leq}{\leqslant}
\def\K{\mathcal{K}}
\def\sl{\mathfrak{sl}}
\def\A{\mathcal{A}}
\def\m{g}
\begin{document}
\title[Modules over $\mathcal{W}_n$ and  $\mathfrak{sl}_{n+1}(\mathbb{C})$]{Irreducible modules over Witt algebras $\mathcal{W}_n$ and over  $\mathfrak{sl}_{n+1}(\mathbb{C})$}
\author{Haijun Tan and Kaiming Zhao}
\maketitle

\begin{abstract}
    In this paper, by using the ``twisting technique" we  obtain a class of new modules $A_b$ over
    the Witt algebras $\mathcal{W}_n$ from modules $A$ over the Weyl algebras $\K_n$ (of Laurent polynomials) for any $b\in\C$.
     We give the necessary and sufficient conditions for $A_b$  to be irreducible, and determine the necessary and sufficient conditions for two such irreducible $\mathcal{W}_n$-modules to  be isomorphic.
   Since $\sl_{n+1}(\C)$ is a subalgebra of $\W_n$, all the above irreducible $\W_n$-modules $A_b$ can be considered as $\sl_{n+1}(\C)$-modules. For a class of such $\sl_{n+1}(\C)$-modules, denoted by $\Omega_{1-a}(\lambda_1,\lambda_2,\cdots,\lambda_n)$ where
   $a\in\C, \lambda_1,\lambda_2,\cdots,\lambda_n \in \C^*$, we determine the necessary and sufficient conditions for these $\sl_{n+1}(\C)$-modules to be irreducible. If the  $\sl_{n+1}(\C)$-module $\Omega_{1-a}(\lambda_1,\lambda_2,\cdots,\lambda_n)$ is reducible, we prove that it has a unique nontrivial  submodule $W_{1-a}(\lambda_1, \lambda_2,...\lambda_n)$ and the quotient module is the finite dimensional $\sl_{n+1}(\C)$-module with highest weight $m\Lambda_n$ for some non-negative integer $m\in \Z_+$. The
   necessary and sufficient conditions for two  $\mathfrak{sl}_{n+1}(\C)$-modules  $\Omega_{1-a}(\lambda_1,\lambda_2,\cdots,\lambda_n)$ and  $W_{1-a}(\lambda_1, \lambda_2,...\lambda_n)$ to be isomorphic are also determined. The  irreducible $\mathfrak{sl}_{n+1}(\C)$-modules $\Omega_{1-a}(\lambda_1, \lambda_2,...\lambda_n)$ and $W_{1-a}(\lambda_1, \lambda_2,...\lambda_n)$  are new.
\end{abstract}

\vskip 10pt \noindent {\em Keywords:}  Witt algebra,  Weyl algebra $\K_n$, $\mathfrak{sl}_{n+1}(\C)$, non-weight
module, irreducible module.

\vskip 5pt \noindent {\em 2000  Math. Subj. Class.:} 17B10, 17B20,
17B65, 17B66, 17B68

\vskip 10pt

\section{Introduction}

In 1992, O. Mathieu \cite{M1} classified all irreducible modules
with finite-dimensional weight spaces over the Virasoro algebra,
proving a conjecture of Kac \cite{Ka}. More precisely, Mathieu
proved that irreducible weight modules with finite-dimensional weight spaces fall into two classes:
highest/lowest weight modules and  modules of tensor fields on a
circle and their quotients.  Mazorchuk and Zhao \cite{MZ1} proved
that an irreducible weight module over the Virasoro algebra is
either a Harish-Chandra module or a module in which all weight
spaces in the weight lattice are infinite-dimensional. In \cite{CGZ,
CM, LLZ, LZ2, Zh}, some simple weight modules  over the Virasoro
algebra with infinite-weight spaces were constructed. Very recently,
the non-weight representation theory of the Virasoro algebra has
made a big progress. A lot of new non-weight  modules were obtained
in \cite{BM, LGZ, LLZ, LZ1, MW, MZ2, OW, TZ1, TZ2} by using
different methods.

As Witt algebras $\mathcal{W}_n$ with $ n>1$  are generalizations of the
Virasoro algebra, we hope to apply some techniques established for
the Virasoro algebra  to Witt algebras $\mathcal{W}_n$. One can
easily note that the algebras $W_1$ and $\mathcal{W}_n$ with $n>1$ are
dramatically different. In $2004$
 Eswara Rao \cite{E2} conjectured that irreducible modules for $\mathcal{W}_n$ with finite-dimensional weight
 spaces also fall in two classes: (1) modules of the highest weight type and  (2) modules of tensor
 fields on a torus and their quotients. Recently, Y. Billig and V. Futorny \cite{BF} proved that Rao's
 conjecture is true. There are also mixed weight modules over $\mathcal{W}_n,$ see, for example \cite{HWZ}. We have not seen any results on irreducible non-weight representations for $\mathcal{W}_n$ with $n\ge2$.
 The next natural tasks for $\mathcal{W}_n$
are studying irreducible weight representations with infinite-dimensional
spaces and irreducible  non-weight representations. In the present paper, we
will consider the latter.

There are very close relationships between representations of $\mathcal{W}_n$ and representations of
finite-dimensional simple Lie algebras. Using finite dimensional irreducible representations of $\sl_n$,
one can obtain a lot of irreducible weight representations of $\mathcal{W}_n$, see \cite{E1, Sh}.
Using irreducible weight representations of $\mathcal{W}_n$,  O. Mathieu
completed the classification of simple Harish-Chandra modules over
simple finite-dimensional Lie algebras in his remarkable paper
\cite{M2}. There are also examples of irreducible weight modules
with infinite-dimensional weight spaces, see, for example
\cite{DFO}. B. Kostant \cite{Ko} studied nonsingular Whittaker
modules for all finite-dimensional simple Lie algebras, while
McDowell \cite{Mc1,Mc2} studied singular Whittaker modules for the
finite-dimensional simple Lie algebras.

The theory of generalized Verma modules for
 finite-dimensional simple Lie algebras is another  popular
 subject, see
  \cite{KM, MS} and references therein.
R. Block \cite{Bl} classified the irreducible modules over
$\mathfrak{sl}_{2}(\C)$. And the complete classification for all
irreducible modules over other finite-dimensional simple Lie
algebras is still open.

The second purpose of the present paper is to construct a class of new
irreducible modules over
the Lie algebra $\mathfrak{sl}_{n+1}(\C)(n\ge 2)$. Before introducing the
contents of this paper we first recall some basic concepts and
notation.

\vskip 5pt We denote by $\mathbb{Z}$, $\mathbb{Z}_+$, $\mathbb{N}$
and $\mathbb{C}$ the sets of  all integers, non-negative integers,
positive integers and complex numbers, respectively. All vector
spaces and algebras in this paper are over $\C$. We denote by
$U(\mathfrak{a})$ the universal enveloping algebra of the Lie
algebra $\mathfrak{a}$ over $\C$.

\

For $n\in \N$, let $\C^n$ be the row vector space  of $1\times n$
matrices with the standard basis $\{e_1,e_2,\cdots, e_n\}.$ Let
$(\cdot|\cdot)$ be the standard symmetric bilinear form such that
$(u|v)$ is the product $uv^T\in \C$, where $u^T$ is the transpose
matrix of $v.$

Let $\mathcal{A}_n=\C[t_1^{\pm1},t_2^{\pm1},\cdots,t_n^{\pm1}]$ be
the Laurent polynomial algebra over $\C$ and $\mathcal{W}_n$ be the
Lie algebra of all derivations of $\mathcal{A}_n$,   called the
{\emph {Witt algebra of rank $n$}}. Denote
$\partial_i=t_i\frac{\partial}{\partial t_i}.$ For $r=(r_1,\cdots,
r_n)\in \Z^n,$ and $
 u=(u_1,\cdots, u_n)\in \C^n,$ let $$
t^r=t^{r_1}_1\cdots
t^{r_n}_{n},\,\,\,D(u,r)=t^r\sum_{i=1}^nu_i\partial_i.
 $$ Then $\mathcal{W}_n$ is the
linear span of the set $\{D(u,r): u\in \C^n,r\in \Z^n\}$.
The Lie
bracket in $\mathcal{W}_n$ is defined by
$$[D(u,r),D(v,s)]=D(w,r+s),$$
 where $u,v\in \C^n, r,s\in\Z^n, w=(u|s)v-(v|r)u.$
It is known that  $\mathfrak{h}=\oplus_{i=1}^n\C \partial_i$ is the
 {\emph {Cartan subalgebra}} of $\mathcal{W}_n$. A $\mathcal{W}_n$-module $V$ is called a weight
 module provided that the action  of $\mathfrak{h}$ on $V$ is
 diagonalizable.

Let
$\mathcal{K}_n$ be the simple associative algebra $\C[t_1^{\pm1},\cdots,t_n^{\pm1},\partial_1,\cdots,\partial_n]$. Then $\K_n$ is also a Lie algebra, where the Lie bracket is defined
 by $[x,y]=xy-yx, \ x,y\in \K_n.$ The corresponding Lie algebra is also denoted by $\K_n$. Clearly, $\W_n$ is a Lie subalgebra of $\K_n.$ The  associative algebra $\C[t^{\pm 1},\partial],
 \partial=t\frac{d}{dt},$ is denoted by $\mathcal{K}$. Denote by $K_i$ the  associative subalgebra
 $\C[t_i^{\pm 1},\partial_i]$ of $\mathcal{K}_n, 1\le i\le n$. We see that $\K_n=K_1\otimes K_2\otimes...\otimes K_n$ as an associative algebra. 

Let $n>1$ be a positive integer. In $\mathcal{W}_n$,
set \begin{equation}\begin{aligned}&e_{ij}=t_it_j^{-1}\partial_j, 1\leq i,j\leq n;\\
&e_{i,n+1}=-t_i\sum_{j=1}^n\partial_j,e_{n+1,i}=t_i^{-1}\partial_i,
1\leq i\leq n;\\
&e_{n+1,n+1}=-\sum_{j=1}^n\partial_j.\end{aligned}\end{equation} It is
well-known (for example \cite{M2}) that the  above set
$$\{e_{ij}:1\le i\ne j\le n+1\}\cup \{e_{ii}-e_{i+1,i+1}: 1\le i\le
n\}$$ is the standard basis of the Lie algebra
$\mathfrak{sl}_{n+1}(\C)$, i.e., $\mathfrak{sl}_{n+1}(\C)$ can be
considered as a subalgebra of $\mathcal{W}_n.$ From this point of
view, each $\mathcal{W}_n$-module can be seen as an
$\mathfrak{sl}_{n+1}(\C)$-module. The {\emph {Cartan subalgebra}} of
$\mathfrak{sl}_{n+1}(\C)$ here is also
$\mathfrak{h}=\span\{\partial_i\,\,|\,\,1\le i\le n\},$ i.e., $\mathfrak{sl}_{n+1}(\C)$ and $\W_n$ share the same Cartan subalgebra. Let
\begin{equation}\mathfrak{n}_{+} =\bigoplus_{1\le i<j\le n+1}\C e_{ij},
\,\,\,\, \mathfrak{n}_{-}=\bigoplus_{1\le j<i\le n+1}\C e_{ij}.\end{equation}
Then $\mathfrak{sl}_{n+1}(\C)$ has the standard triangular
decomposition $\mathfrak{sl}_{n+1}(\C)=\mathfrak{n}_{-}\oplus
\mathfrak{h}\oplus\mathfrak{n}_{+}.$

\

The present paper is organized as follows. In section 2,
using the ``twisting technique"  we obtain a class of
 modules $A_b$ over   the Witt algebras $\mathcal{W}_n$ from modules $A$ over  algebras $\K_n$ for any $b\in\C$ with the action
 $$D(u,k)\circ v=(D(u,k)+b(u|k)t^k)v,\,\,\forall\,\, u\in\C^n,k\in \Z^n, v\in A.$$
 We give the necessary and sufficient conditions for $A_b$  to be irreducible (Theorem 6), and determine the necessary and sufficient conditions for two such irreducible $\mathcal{W}_n$-modules to  be isomorphic (Theorem 11).
In section 3,
by embedding $\mathfrak{sl}_{n+1}(\C)$ into $\mathcal{W}_n$ as in (1.1) we
obtain a class of non-weight $\mathfrak{sl}_{n+1}(\C)$-modules
$\Omega_{1-a}(\lambda_1,\cdots,\lambda_n)=\C[\partial _1,\partial _2,,...,\partial _n]$  for any $\lambda_1,\cdots,\lambda_n\in\C^*, a\in\C$, where the action of $\W_n$ is given by
$$D(u, j)\circ(\prod_{i=1}^n\partial_i^{k_i})=(\sum_{i=1}^nu_i\partial_i-a(u|j))\prod_{i=1}^n\big(\lambda_i^{j_i}(\partial_i-j_i)^{k_i}\big)$$  for all $u\in\C^n, k \in\Z_+^n,   j\in\Z^n.$
 A striking property of these $\sl_{n+1}(\C)$-modules $\Omega_{1-a}(\lambda_1,\cdots,\lambda_n)$ is that they are free cyclic modules over the polynomial algebra $\C[\h]$.
We determine the
necessary and sufficient conditions for these $\sl_{n+1}(\C)$-modules to be
irreducible (Corollary 17).
If the  $\sl_{n+1}(\C)$-module $\Omega_{1-a}(\lambda_1,\cdots,\lambda_n)$ is reducible, we prove that it has a unique nontrivial submodule $W_{1-a}(\lambda_1,\cdots,\lambda_n)$ and the quotient module $\Omega_{1-a}/W_{1-a}$ is the finite dimensional $\sl_{n+1}(\C)$-module $L(m\Lambda_n)$ with highest weight $m\Lambda_n$ for some non-negative integer $m\in \Z_+$
(Theorem 16). As a by-product we see that all weight spaces of $L(m\Lambda_n)$ are 1-dimensional,  and that the $\sl_{n+1}(\C)$-modules $W_{1-a}(\lambda_1,\cdots,\lambda_n)$   are finitely generated free  modules over $\C[\h]$. The necessary and sufficient conditions for two such irreducible $\mathfrak{sl}_{n+1}(\C)$-modules  $\Omega_{1-a}(\lambda_1,\cdots,\lambda_n)$ (and $W_{1-a}(\lambda_1,\cdots,\lambda_n)$) to be isomorphic are also determined (Theorem 18 and Corollary 19).
The  irreducible $\mathfrak{sl}_{n+1}(\C)$-modules $\Omega_{1-a}(\lambda_1, \lambda_2,...\lambda_n)$ and $W_{1-a}(\lambda_1, \lambda_2,...\lambda_n)$  are new.

\section {Constructing $\mathcal{W}_n$-modules}

In this section, we will   use the ``twisting technique" to
construct $\W_n$-modules from modules over the associative algebra
$\K_n$.  This is a generalization of the results in
\cite{LZ1}. We also determine the necessary and sufficient
conditions for two such irreducible $\W_n$-modules to be isomorphic.

Let us recall the \emph{extended Witt algebra of rank n}:
$\mathfrak{W}_n=\mathcal{W}_n\ltimes \mathcal{A}_n$ from \cite{GLZ}.
The Lie bracket in $\mathfrak{W}_n$ is defined by
 $$[t^k,t^s]=0, \ [D(u,k), t^s]=(u|s)t^{k+s}, \ [D(u,k),D(v,s)]=D(w,k+s),$$
 where $u,v\in \C^n, k,s\in\Z^n, w=(u|s)v-(v|k)u.$

 Clearly, $\W_n$ is a Lie subalgebra of $\mathfrak{W}_n$ and $\mathfrak{W}_n$ is a Lie subalgebra of $\K_n$. So each module over the associative algebra $\K_n$  can be considered as a $\mathfrak{W}_n$-module.
 For  each $b\in \C$, we have the following  automorphism of $\mathfrak{W}_n$
$$\sigma_b: \mathfrak{W}_n\to \mathfrak{W}_n;$$
$$D(u,k)\mapsto D(u,k)+b(u|k)t^k, \ t^k\mapsto t^k,\,\,\forall\,\, u\in \C^n,k\in \Z^n.$$
 Note that $\sigma_b$ cannot
be extended to an isomorphism of the associative algebra $\K_n$ if
$b\ne0$.

Now we are going  to use the ``twisting technique". For any   module $A$ over the associative algebra
$\mathcal{K}_n$ we can define the $\mathcal{W}_n$-module action on
 $A$ as follows
$$D(u,k)\circ(v)=D(u,k)(v)+b(u|k)t^k(v), \ v\in  {A}, k\in \Z^n, b\in \C.$$ The resulting
$\mathcal{W}_n$-module is denoted by $ {A}_b.$ For convenience, we
denote the element $D(u,k)+b(u|k)t^k$ by $D _b(u,k)$.

\begin{lemma}Let $b\in \C, u,v\in \C^n, i, k\in\Z^n.$ In $\K_n$ we
have
\begin{equation}\begin{split}\Delta_b(u,v;i,k)&=-\frac{1}{2}D _b(u,k-i)D _b(v,i)
-\frac{1}{2}D _b(u,k+i)D _b(v,-i)\\
&\ \ \ +D _b(u,k)D _b(v,0)\\
&=b(b-1)(u|i)(v|i)t^k.\end{split}\end{equation}
\end{lemma}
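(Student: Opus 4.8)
The plan is to reduce everything to a single normal-form product formula in $\K_n$ and then read off the cancellations. First I would record the only commutation rule needed: writing $\partial_u=\sum_{\ell=1}^n u_\ell\partial_\ell$, so that $D(u,p)=t^p\partial_u$ and $D_b(u,p)=t^p\big(\partial_u+b(u|p)\big)$, the relation $\partial_\ell t^q=t^q(\partial_\ell+q_\ell)$ in $\K_n$ gives $\partial_u t^q=t^q\big(\partial_u+(u|q)\big)$. Commuting the middle $\partial_u$ past $t^q$ then yields the closed form
\begin{equation*}
D_b(u,p)D_b(v,q)=t^{p+q}\Big(\partial_u\partial_v+\big((u|q)+b(u|p)\big)\partial_v+b(v|q)\partial_u+b(u|q)(v|q)+b^2(u|p)(v|q)\Big).
\end{equation*}
This is the computational heart of the lemma; everything afterward is bookkeeping.

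Next I would specialize this formula to the three products occurring in $\Delta_b(u,v;i,k)$, namely $(p,q)=(k-i,\,i)$, then $(p,q)=(k+i,\,-i)$, and finally $(p,q)=(k,\,0)$. Each has $p+q=k$, so a common factor $t^k$ pulls out of the whole expression. At this stage I would use bilinearity of $(\cdot|\cdot)$ in the form of the linearity identities $(u|k\pm i)=(u|k)\pm(u|i)$ together with the sign rules $(u|-i)=-(u|i)$ and $(v|-i)=-(v|i)$ to simplify the second product.

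Then I would assemble the combination $-\tfrac12(\text{first})-\tfrac12(\text{second})+(\text{third})$ and verify that the four types of terms cancel or collect exactly as claimed. The $\partial_u\partial_v$ terms cancel because their total coefficient is $-\tfrac12-\tfrac12+1=0$; the $\partial_u$ terms cancel by the antisymmetry of $b(v|q)$ under $q\colon i\mapsto -i$; the $\partial_v$ terms cancel after substituting $(u|k\pm i)=(u|k)\pm(u|i)$, since the $(u|i)$ pieces pair off and the $(u|k)$ pieces combine to $-\tfrac12 b(u|k)-\tfrac12 b(u|k)+b(u|k)=0$; and the remaining scalar terms leave $-b(u|i)(v|i)+b^2(u|i)(v|i)$, which factors as $b(b-1)(u|i)(v|i)$. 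Restoring the common factor $t^k$ gives the asserted identity.

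The only genuine obstacle is sign discipline in the second product, where $q=-i$ flips several values of the bilinear form; a careless sign there would spoil both the $\partial_u$ cancellation and the coefficient of the $b^2$ scalar term, and hence the clean $b(b-1)$ factorization. Once the normal-form product formula above is in hand, no structural difficulty remains and the rest is a routine linear combination.
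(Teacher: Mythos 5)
Your proposal is correct and matches the paper's own proof in essence: the paper likewise expands $D_b(u,k-i)D_b(v,i)$ into normal form with $t^k$ pulled to the left (obtaining exactly your product formula at $(p,q)=(k-i,i)$), then substitutes $i\mapsto -i$ and $i\mapsto 0$ and combines the three terms, with the same cancellations of the $\partial_u\partial_v$, $\partial_u$, and $\partial_v$ parts and the same surviving scalar $b(b-1)(u|i)(v|i)t^k$. Your version is merely a slightly more systematic packaging (general parameters $p,q$) of the identical computation, and all your signs and coefficients check out.
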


\begin{proof} In $\K_n$ we have
$$\begin{aligned} D _b(u,k-i)&D _b(v,i)=
(D(u,k-i)+b(u|k-i)t^{k-i})(D(v,i)+b(v|i)t^i)\\
&=t^k((u|i)D(v,0))+D(u,0)D(v,0))+t^kb(u|k-i)D(v,0)\\
&\ \ \ +t^kb(v|i)((u|i)+(u|\partial))+b^2t^k(u|k-i)(v|i).
\end{aligned}$$
Replacing $i$ with $-i,0,$ and by simple computations, we can obtain (2.1).
\end{proof}

From this lemma, we can easily deduce the following
\begin{theorem}
Let $A$ be a simple module over the associative algebra
$\mathcal{K}_n$.  If $b\notin \{0,1\}$, then the $\mathcal{W}_n$-module ${A}_b$ is simple.
\end{theorem}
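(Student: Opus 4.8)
The plan is to show that every nonzero $\mathcal{W}_n$-submodule $M$ of $A_b$ is all of $A_b$. Since $A_b$ and $A$ coincide as vector spaces and $A$ is simple over $\mathcal{K}_n$, it is enough to prove that $M$ is stable under the full $\mathcal{K}_n$-action on $A$; then for any $0\neq w\in M$ we would get $M\supseteq \mathcal{K}_n\cdot w=A$, forcing $M=A_b$. Thus the whole argument reduces to upgrading the $\mathcal{W}_n$-stability of $M$ to $\mathcal{K}_n$-stability, and the vehicle for this is Lemma 5.

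First I would recover the action of the Laurent monomials $t^k$ on $M$. Fix $k\in\Z^n$ and let $w\in M$. Every operator $D_b(p,q)$ (with $p\in\C^n$, $q\in\Z^n$) preserves $M$ because $M$ is a $\mathcal{W}_n$-submodule, and hence so does the left-hand side $\Delta_b(u,v;i,k)$ of (2.1), being a fixed linear combination of products of such operators. By Lemma 5 we have $\Delta_b(u,v;i,k)\,w=b(b-1)(u|i)(v|i)\,t^k w$. Taking $i=e_1$ and $u=v=e_1$ gives $(u|i)(v|i)=(e_1|e_1)^2=1$, and since $b\notin\{0,1\}$ the scalar $b(b-1)$ is nonzero; dividing by it yields $t^k w\in M$. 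As $k\in\Z^n$ and $w\in M$ were arbitrary, $t^k M\subseteq M$ for all $k$.

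Next I would note that $M$ is automatically stable under each $\partial_i$: since $(u|0)=0$, the twisting term vanishes, so $D_b(u,0)=D(u,0)=\sum_{i=1}^n u_i\partial_i$ acts on $M$ as the untwisted operator $\sum_{i=1}^n u_i\partial_i$. Letting $u$ range over $\C^n$ shows $\partial_i M\subseteq M$ for every $i$. Now the associative algebra $\mathcal{K}_n$ is generated by $\{t^k:k\in\Z^n\}$ together with $\partial_1,\dots,\partial_n$, so the stability just established makes $M$ a $\mathcal{K}_n$-submodule of the simple module $A$; therefore $M=A=A_b$, proving simplicity.

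The only genuine obstacle is the first step, namely reconstructing multiplication by $t^k$ purely from the twisted derivation action, and this is exactly where the hypothesis $b\notin\{0,1\}$ is indispensable: the right-hand side of (2.1) carries the factor $b(b-1)$, which degenerates to $0$ precisely when $b\in\{0,1\}$, and in that case the identity gives no information about the $t^k$-action. This reflects the genuine feature that $A_0$ and $A_1$ can fail to be simple, so the theorem is sharp in $b$.
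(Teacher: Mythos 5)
Your proof is correct and takes essentially the same route as the paper: both recover the action of each $t^k$ from the twisted $\mathcal{W}_n$-action via the identity $\Delta_b(u,v;i,k)=b(b-1)(u|i)(v|i)t^k$ (the paper's Lemma 1, which you cite as Lemma 5), using $b\notin\{0,1\}$ to invert the scalar, and then conclude from the simplicity of $A$ over $\mathcal{K}_n$. You merely spell out explicitly what the paper leaves implicit, namely that a $\mathcal{W}_n$-submodule stable under all $t^k$ and all $\partial_i$ is a $\mathcal{K}_n$-submodule.
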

\begin{proof} It is sufficient to show that the actions of $t^k, k\in \Z^{n}$  on ${A}$
can be expressed as the actions of certain elements of
$U(\mathcal{W}_n)$ on $A_b$. Take a nonzero element $i\in \Z^n.$ Since $b\ne
0, 1,$ we can find $u,v\in \C^n$ such that
$$\theta=b(b-1)(u|i)(v|i)\ne 0.$$ Then by Lemma 1 we have
$t^k=\theta^{-1}\Delta_b(u,v;i,k)$, which means that the action of
$t^k$ on ${A}$ is just the action of the element
$$\theta^{-1}\big(-\frac{1}{2}{D}(u,k-i){D}(v,i)
-\frac{1}{2}{D}(u,k+i){D}(v,-i)+{D}(u,k){D}(v,0)\big)$$ of $U(\mathcal{W}_n)$ on $A_b$, as desired.
\end{proof}

Before dealing with the case $b=0$, let us prove  the following

\begin{lemma} Let $A$ be a simple module over the associative algebra
  $\mathcal{K}_n$ with  $A\ncong\A_n$. Then   there exists some $i, 1\le i\le n$ such that
   $\partial_i-k$ acts injectively on $A$ for all $k\in \Z$.
\end{lemma}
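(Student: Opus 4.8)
The plan is to argue by contraposition: it suffices to show that if, for \emph{every} $i\in\{1,\dots,n\}$, there is an integer $k_i\in\Z$ for which $\partial_i-k_i$ fails to be injective on $A$ (equivalently, $\partial_i$ admits the integer eigenvalue $k_i$), then $A\cong\A_n$. I would prove this reformulated statement by induction on $n$. For the base case $n=1$, pick $0\neq v$ with $\partial_1 v=k_1 v$; since $[\partial_1,t_1]=t_1$ and $t_1$ is invertible, the vectors $t_1^{m}v$ are $\partial_1$-eigenvectors of distinct eigenvalues $k_1+m$, so $\span\{t_1^m v:m\in\Z\}$ is a nonzero $\K_1$-submodule, hence all of $A$, and the assignment $t_1^m v\mapsto t_1^{k_1+m}$ is an isomorphism $A\cong\A_1$.

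For the inductive step, set $A^{(m)}=\{v\in A:\partial_1 v=mv\}$ and write $\K'=\C[t_2^{\pm1},\dots,t_n^{\pm1},\partial_2,\dots,\partial_n]\cong\K_{n-1}$, which commutes with $\partial_1$ and with $t_1$. Since $t_1\colon A^{(m)}\to A^{(m+1)}$ is a bijection intertwining the $\K'$-action, all the eigenspaces $A^{(m)}$ are isomorphic as $\K'$-modules, and $A=\bigoplus_{m}A^{(m)}$. The crucial claim is that $A^{(k_1)}$ is a \emph{simple} $\K'$-module. To see this, given a nonzero $\K'$-submodule $N\subseteq A^{(k_1)}$, form $\widetilde N=\sum_{a\in\Z}t_1^a N$; using that $\partial_1$ acts on $N$ as the scalar $k_1$ and that $\K'$ commutes with $t_1^{\pm1}$, one checks $\widetilde N$ is stable under $t_1^{\pm1}$, $\partial_1$ and $\K'$, hence is a nonzero $\K_n$-submodule of $A$. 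Simplicity of $A$ forces $\widetilde N=A$, and intersecting with $A^{(k_1)}$ (using the directness of $\bigoplus_m A^{(m)}$ together with $t_1^aN\subseteq A^{(k_1+a)}$) gives $N=A^{(k_1)}$.

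It remains to feed the hypotheses into the induction. For each $j\ge 2$, $\partial_j$ has the integer eigenvalue $k_j$ somewhere in $A=\bigoplus_m A^{(m)}$, and since $t_1$ commutes with $\partial_j$ and moves eigenvalue-$k_j$ vectors between the $A^{(m)}$, the eigenvalue $k_j$ already occurs on $A^{(k_1)}$; thus the inductive hypothesis applies to the simple $\K'$-module $A^{(k_1)}$ and yields $A^{(k_1)}\cong\A_{n-1}$. Choosing the weight basis $\{w_{r'}\}$ of $A^{(k_1)}$ pulled back from $\{t_2^{r_2}\cdots t_n^{r_n}\}$, the set $\{t_1^a w_{r'}:a\in\Z,\,r'\in\Z^{n-1}\}$ is a basis of $A$, and $t_1^a w_{r'}\mapsto t_1^{k_1+a}t_2^{r_2}\cdots t_n^{r_n}$ is readily checked on generators to be a $\K_n$-isomorphism $A\cong\A_n$, completing the induction. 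I expect the main obstacle to be the simplicity claim for $A^{(k_1)}$ over $\K'$: everything else is bookkeeping with eigenvalue shifts, but that step is where the invertibility of $t_1$ and the simplicity of $A$ must be combined carefully.
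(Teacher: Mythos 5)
Your argument is correct, but it takes a genuinely different route from the paper's. The paper also argues by contraposition, but globally and in one stroke: from each relation $(\partial_i-k_i)v_i=0$ it produces $t_i^{-k_i}v_i\in\ker\partial_i$, then invokes the fact that $\mathfrak{h}$ acts ad-semisimply on $\K_n$ (with integer eigenvalue shifts) together with simplicity of $A$ to conclude that $A$ is a simultaneous $\mathfrak{h}$-weight module with integer weights; translating a weight vector by a suitable monomial gives $v_0$ with $\mathfrak{h}v_0=0$, whence $A=\K_n v_0=\A_n v_0\cong\A_n$. You instead induct on $n$, peeling off one variable: your key lemma --- that the eigenspace $A^{(k_1)}$ is a \emph{simple} $\K'$-module, proved via the submodule $\widetilde N=\sum_a t_1^aN$ and comparison of $\partial_1$-eigencomponents --- correctly replaces the paper's simultaneous weight-space decomposition, and your transport of the hypotheses for $j\ge 2$ into $A^{(k_1)}$ (project a $\partial_j$-eigenvector to a nonzero $\partial_1$-homogeneous component, then shift by a power of $t_1$) is sound. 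The paper's proof is shorter but leaves implicit the standard step that one eigenvector for each of the commuting $\partial_i$ yields a simultaneous integer-weight decomposition; your version is longer but each step is elementary and self-contained, and it yields the reusable structural fact that $\partial_1$-eigenspaces of a simple $\K_n$-module with integer $\partial_1$-spectrum are simple $\K_{n-1}$-modules. One point you should make explicit: you assert $A=\bigoplus_m A^{(m)}$ without proof; it follows by the same device you use elsewhere, since every generator $t_i^{\pm1},\partial_i$ of $\K_n$ shifts $\partial_1$-eigenvalues by integers, so $\bigoplus_{m\in\Z}A^{(m)}$ is a nonzero $\K_n$-submodule of the simple module $A$ (directness of a sum of distinct eigenspaces being automatic).
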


\begin{proof}  To the contrary, assume that for each $i,$ there exists $k_i\in \Z$ such that $(\partial_i-k_i)v_i=0$ for some nonzero $v_i\in A.$ Then $\partial_i(t_i^{-k_i}v_i)=t_i^{-k_i}((\partial_i-k_i)v_i)=0$ and $t_i^{-k_i}v_i\ne 0.$ So  $\ker(\partial_i)\ne 0$ for each $i$. Now we consider $A$ as an irreducible module over the Lie algebra $\K_n$. Since $\h$ is ad-semisimple on $\K_n$, we see that
$A$ is a weight module with respect to $\h$. There must exist a nonzero vector $v\in A$ such that $\h v=0$. Then $A=\K_nv_0=\A_nv_0$ and, obviously, $A\cong \A_n,$ contrary to the assumption that $A\ncong \A_n.$
\end{proof}

Now let us consider the case $b=0.$ We have the following


\begin{theorem} Let $A$ be a simple module over the associative algebra
$\mathcal{K}_n$.  Then the $\mathcal{W}_n$-module $A_0$ is simple if
and only if $A$ is not isomorphic to   the natural $\mathcal{K}_n$-module $\A_n$.
\end{theorem}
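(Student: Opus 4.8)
The plan is to exploit the fact that, for $b=0$, the $\W_n$-action on $A_0$ is simply the restriction of the $\K_n$-action, and that the two families of operators $\partial_i=D(e_i,0)$ and $t^s\partial_i=D(e_i,s)$ (for all $s\in\Z^n$) already lie in $\W_n$. Observe first that the approach of the preceding Theorem (the case $b\notin\{0,1\}$) is unavailable here: when $b=0$ the identity of Lemma~1 degenerates to $\Delta_0(u,v;i,k)=0$, and in fact one \emph{cannot} recover the action of any $t^k$ from $U(\W_n)$. Indeed every element of $\W_n$ annihilates $1=t^0\in\A_n$, since $D(u,k)t^0=(u|0)t^k=0$; hence the associative subalgebra of $\K_n$ generated by $\W_n$ kills $1$ and so contains no $t^k$. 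This is precisely why the hypothesis $A\ncong\A_n$ must be used in an essential way.

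For the easy implication I would argue by contraposition: if $A\cong\A_n$, then under the natural action $D(u,k)t^s=(u|s)t^{k+s}$ we have $D(u,k)t^0=0$ for all $u,k$, so $\C t^0$ is a one-dimensional (trivial) $\W_n$-submodule. It is nonzero and proper because $\A_n$ is infinite dimensional, and therefore $A_0$ is not simple.

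For the main implication, assume $A\ncong\A_n$ and invoke the previous Lemma to fix an index $i$ for which $\partial_i$ acts injectively on $A$. Given a nonzero $\W_n$-submodule $W$, I would set
\[
W^{\circ}=\sum_{s\in\Z^n}D(e_i,s)W=\sum_{s\in\Z^n}t^s\partial_i W\subseteq W .
\]
The crux is to show that $W^{\circ}$ is in fact a $\K_n$-submodule of $A$. Stability under each $t^k$ is immediate from $t^kD(e_i,s)=D(e_i,k+s)$ and reindexing. Stability under each $\partial_j$ follows from $\partial_j t^s=t^s(\partial_j+s_j)$, together with $\partial_i\partial_j=\partial_j\partial_i$ and the $\partial_j$-stability of $W$: for $w\in W$ one gets $\partial_j\big(t^s\partial_i w\big)=t^s\partial_i(\partial_j+s_j)w=D(e_i,s)w'\in W^{\circ}$ with $w'=(\partial_j+s_j)w\in W$. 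Since $\K_n$ is generated as an associative algebra by the $t^{\pm e_j}$ and the $\partial_j$, this shows $W^{\circ}$ is $\K_n$-stable. By simplicity of $A$ over $\K_n$, either $W^{\circ}=A$, forcing $W=A$, or $W^{\circ}=0$, forcing $\partial_i W=0$ and hence $W=0$ by injectivity of $\partial_i$. In either case $W\in\{0,A\}$, so $A_0$ is simple.

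The step I expect to be the main obstacle is the middle one: recognizing that the desired $\K_n$-submodule $W^{\circ}$ can be assembled \emph{entirely} from the $\W_n$-operators $D(e_i,s)$, which is what guarantees $W^{\circ}\subseteq W$ and circumvents the impossibility of producing the individual $t^k$ inside $U(\W_n)$, and then carrying out the $\partial_j$-stability verification cleanly. The previous Lemma enters only to rule out the degenerate alternative $W^{\circ}=0$.
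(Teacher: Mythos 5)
Your proof is correct and follows essentially the same route as the paper: the necessity is the identical observation that $\C\subseteq\A_n$ is a proper trivial submodule, and the sufficiency rests on the same two pillars the paper uses, namely Lemma~3 and the fact that $t^s\partial_i=D(e_i,s)$ lies in $\W_n$ and acts on $A_0$ exactly as on $A$. The only difference is packaging: where the paper shows every nonzero $v$ is a $U(\mathcal{W}_n)$-generator by writing $y\in A=\K_n(\partial_j v)$ as $\sum_{\alpha} f_{\alpha}\circ(g_{\alpha}\partial_j)\circ v$ with $f_{\alpha}\in U(\h)$ and $g_{\alpha}\in\A_n$, you equivalently show that any nonzero $\W_n$-submodule $W$ contains the $\K_n$-submodule $\sum_{s\in\Z^n}t^s\partial_i W$.
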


\begin{proof}
Since the natural $\mathcal{W}_n$-module $\A_n$   has a proper submodule $\C,$ this implies the necessity.

Conversely, suppose $A\ncong \A_n$ as $\mathcal{K}_n$-modules. Then by Lemma 3 we know that there exists some $j$ such that 
for any  nonzero element $v\in A,$ $\partial_j(v)\neq 0.$ Take $y\in A$.
Since $A=\mathcal{K}_n(\partial_j(v))$, there exist $f_{\alpha}\in U(\h), g_{\alpha}\in \A_n, 1\le \alpha\le \beta, $ such that
$$\begin{aligned}y=&\Big(\sum_{1\le \alpha\le \beta}f_{\alpha}g_{\alpha}\Big)(\partial_j(v))=\Big(\sum_{1\le \alpha\le \beta}f_{\alpha}(g_{\alpha}\partial_j)\Big)(v)\\
&=\Big(\sum_{1\le \alpha\le \beta}f_{\alpha}(g_{\alpha}\partial_j)\Big)\circ (v)
=\Big(\sum_{1\le \alpha\le \beta}f_{\alpha}\circ(g_{\alpha}\partial_j)\Big)\circ (v)
,\end{aligned}$$
which means that $y\in U(\mathcal{W}_n)\circ(v)$. Thus $A_0$ is an irreducible $\mathcal{W}_n$-module. This completes the proof.
\end{proof}

\

\noindent {\bf{Remark.}} If $A\cong\C[t_1^{\pm 1},\cdots, t_n^{\pm 1}]$ as $\mathcal{K}_n$-modules, then the corresponding $\mathcal{W}_n$-module $A_0$  has a proper submodule  $\C$. From \cite{Z}, we know that
the quotient module $\C[t_1^{\pm 1},\cdots, t_n^{\pm 1}]/\C$ is simple over $\mathcal{W}_n.$

\

Now we consider the case $b=1$.

\

\begin{theorem}  Let $A$ be an  irreducible module over the associative algebra $\mathcal{K}_n$. Then $\mathfrak{h}(A)$ is an irreducible   $\mathcal{W}_n$-submodule  of the $\mathcal{W}_n$-module $A_1$. Consequently, the $\mathcal{W}_n$-module $A_1$ is irreducible if and only if
 $\mathfrak{h}(A)=A$.
\end{theorem}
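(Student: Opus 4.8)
The plan is to show two things for the case $b=1$: first that $\h(A)=\sum_{i=1}^n \partial_i(A)$ is a $\W_n$-submodule of $A_1$, and second that this submodule is irreducible. Throughout, I will exploit that with $b=1$ the twisted action reads $D_1(u,k)=D(u,k)+(u|k)t^k$, and note the key identity $D(u,k)=t^k\sum_i u_i\partial_i$, so that $D_1(u,k)=t^k\big(\sum_i u_i\partial_i+(u|k)\big)=t^k\sum_i u_i(\partial_i+k_i)$. Since $\partial_i t^k=t^k(\partial_i+k_i)$ in $\K_n$, this rewrites as $D_1(u,k)=\big(\sum_i u_i\partial_i\big)t^k$, i.e. $D_1(u,k)=\big(\sum_i u_i\partial_i\big)\,t^k$ as an operator. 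This factorization is the crux: every twisted generator, when $b=1$, is a left multiple of an element of $\h$.

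First I would establish that $\h(A)$ is a submodule. Take $w=\partial_j(v)\in\h(A)$ and apply $D_1(u,k)$. Using the factorization above, $D_1(u,k)\circ w=\big(\sum_i u_i\partial_i\big)t^k\,w$, which visibly lies in $\h(A)=\big(\sum_i\C\partial_i\big)(A)$. Extending additively over $\h(A)$ shows $D_1(u,k)\circ \h(A)\subseteq\h(A)$ for all $u,k$, so $\h(A)$ is a $\W_n$-submodule of $A_1$. (Here one should check carefully that $\h(A)$ is genuinely $\h$-stable and closed under the full set of generators $D(u,k)$; the factorization handles exactly this.)

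Next I would prove irreducibility of the $\W_n$-module $\h(A)$. Let $N\subseteq\h(A)$ be a nonzero $\W_n$-submodule and pick $0\ne w\in N$. The idea is to recover, from the $\W_n$-action on $w$, enough of the $\K_n$-action to invoke the simplicity of $A$. Specifically, for any $k\in\Z^n$ the operators $D_1(u,k)$ acting on $N$ produce $\big(\sum_i u_i\partial_i\big)t^k\, w$; letting $u$ range over $\C^n$ I obtain all of $\partial_i t^k w$, hence all $t^k w'$ for $w'$ in a subspace, and combined with the $\h$-action (which is available since $\h\subseteq\W_n$) I can build up the action of arbitrary monomials $t^k\partial^m$. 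The goal is to show $\K_n w\subseteq N$, so that $\K_n w=A$ by simplicity of $A$, forcing $N\supseteq \h(A)$ and thus $N=\h(A)$. The main obstacle — and the step requiring genuine care — is that the twisted generators only deliver the operators $\partial_i t^k$ (first-order in $\partial$ after shifting), not the bare $t^k$, so one must argue that the $U(\W_n)$-orbit of $w$, together with products of such operators and the Cartan action, still generates the full $\K_n$-orbit; this is where an argument analogous to the $b=0$ case in Theorem 4 is needed, now adapted to the constraint that every reachable operator carries a leading $\partial_i$ factor. Once $\K_n w\supseteq A$ is secured, the final equivalence ``$A_1$ irreducible iff $\h(A)=A$'' follows immediately: if $\h(A)=A$ then $A_1=\h(A)$ is irreducible by the above, while if $\h(A)\subsetneq A$ then $\h(A)$ is a proper nonzero submodule (nonzero because $A$ is a nontrivial $\K_n$-module on which some $\partial_i$ acts nonzero), so $A_1$ is reducible.
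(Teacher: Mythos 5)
Your first half is fine and coincides with the paper: the factorization $D_1(u,k)=D(u,0)t^k$ is exactly the paper's identity (2.2), and it immediately gives that $D_1(u,k)$ maps all of $A$ (not just $\h(A)$) into $\h(A)$, so $\h(A)$ is a $\W_n$-submodule of $A_1$. The irreducibility half, however, has a genuine gap, and the target you set yourself is the wrong one: you aim to show $\K_n w\subseteq N$ for a nonzero $w$ in a nonzero submodule $N\subseteq \h(A)$, which would give $A=\K_n w\subseteq N\subseteq \h(A)$. This is impossible whenever $\h(A)\subsetneq A$ --- precisely the interesting case the theorem covers. For instance, for $A=\Omega(\lambda_1,\dots,\lambda_n)=\C[\partial_1,\dots,\partial_n]$ of Example 3, $\h(A)$ is the codimension-one ideal of polynomials with zero constant term, yet the theorem asserts $\h(A)$ is irreducible; there $\K_n w=A\not\subseteq\h(A)$ for any $w\ne0$. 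The obstacle you flag (``every reachable operator carries a leading $\partial_i$ factor'') is not a technicality to be argued around by an analogue of the $b=0$ case: since $U(\W_n)\circ w\subseteq \h(A)$, the full $\K_n$-orbit of $w$ genuinely cannot be recovered, and no refinement of your plan can succeed.

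The missing idea (and the paper's actual argument) is to aim at the correct, weaker target: show that every spanning element $D(u,0)y$ of $\h(A)$, with $u\in\C^n$ and $y\in A$ arbitrary, lies in $U(\W_n)\circ w$. Using the normal-ordering decomposition $\K_n=\sum_{k\in\Z^n}U(\h)t^k$ (valid since $t^k\partial^m=(\partial-k)^mt^k$) and the $\K_n$-simplicity of $A$, write $y=\sum_k f_k t^k w$ with $f_k\in U(\h)$. Since $D(u,0)$ commutes with each $f_k$, and since the twisted action of elements of $U(\h)$ agrees with the untwisted one (the twist vanishes at $k=0$), your factorization read in the other direction gives
\begin{equation*}
D(u,0)y=\sum_k f_k\,D(u,0)t^k w=\sum_k f_k\circ D(u,k)\circ w\in U(\W_n)\circ w,
\end{equation*}
so $N\supseteq \h(A)$ and irreducibility follows without ever producing the bare operators $t^k$. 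One further small gap: your parenthetical claim that $\h(A)\ne0$ because ``some $\partial_i$ acts nonzero'' itself needs proof; if all $\partial_i$ annihilated $A$, then $t_iv=\partial_i t_i v-t_i\partial_i v=0$ with $t_i$ invertible would force $A=0$, which is the argument (in the paper's variant, via (2.2)) that must be supplied.
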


\begin{proof} For any $u\in\C^n$, $k\in\Z^n$ and $v\in A$ we have
\begin{equation}D(u, k)\circ v=(t^kD(u, 0)+(u|k)t^k)v=D(u, 0)(t^k v)\in  \mathfrak{h}(A).\end{equation}
So $\mathfrak{h}(A)$ is a $\mathcal{W}_n$-submodule of $A_1$. If $\mathfrak{h}(A)=0$, from (2.2) we deduce that $t^kv=0$ on $A$ for any $k\ne0$ and, consequently, $v=0$, which is impossible. So $\mathfrak{h}(A)\ne0$


 For any two elements $D(u, 0) y, v'\in\mathfrak{h}(A)$ where $u\in\C^n, y\in A$ and $ v'\neq 0,$   since $A$ is a simple  $\K_n$-module we can find    finitely many  $k\in\Z^n$ and $ f_{k}\in U(\h)$ such that
$y=\sum_{k}f_{k}t^{k}v'$. Thus, by (2.2), we have
$$\begin{aligned}D(u, 0) (y)&=D(u, 0) \Big( \sum_{k}f_{k}t^{k}v'\Big)=\sum_{k}f_{k}D(u, 0)t^kv'\\
&=\sum_{k}f_{k}\circ D(u, k)\circ v',\end{aligned}$$
which means that $D(u, 0) y\in U(\mathcal{W}_n)\circ v'$. Therefore, $\mathfrak{h}(A)$ is an irreducible    $\mathcal{W}_n$-submodule of $A_1$.
\end{proof}

\noindent{\bf{Example 1.}} If $A\cong \A_n$ as $\K_n$-modules, then $\h(A)\ne A$ and $A_1$ is not simple. But $\h(\A_n)$   is a simple $\mathcal{W}_n$-submodule of $A_1$.

\

Now we can summarize simplicity results on $\mathcal{W}_n$-module $A_b$ as follows.

\begin{theorem}\label{iso} Suppose that $b\in \C$, and $A$ is an
 irreducible module over the associative algebra  $\mathcal
{K}_n$. Then $A_b$ is irreducible as $\mathcal{W}_n$-module if and only if one
of the following holds
\begin{enumerate}[$(i).$]
\item
$b\ne 0$ or $1$;
\item  $b=1$ and $\h (A)=A$;
\item $b=0$ and $A$ is not isomorphic to the natural $\mathcal {K}_n$-module $\mathcal{A}_n$.
\end{enumerate}\end{theorem}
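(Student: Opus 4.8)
The plan is to observe that Theorem~\ref{iso} is simply a consolidation of the three cases already established, so the proof amounts to assembling Theorems 2, 4, and 5 into a single statement with no genuinely new content. First I would dispose of case $(i)$ by citing Theorem 2 directly: it states that if $b\notin\{0,1\}$ then $A_b$ is simple for \emph{any} simple $\K_n$-module $A$, with no further condition, which matches the disjunct ``$b\ne 0$ or $1$'' requiring nothing of $A$.

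Next I would handle case $(iii)$, $b=0$, by invoking Theorem 4, which gives the equivalence ``$A_0$ is simple if and only if $A\ncong\A_n$ as $\K_n$-modules.'' This is exactly the third clause, so both directions of the biconditional for $b=0$ are already in hand. Similarly, for case $(ii)$, $b=1$, I would quote Theorem 5: there $\h(A)$ is shown to be an irreducible $\mathcal{W}_n$-submodule of $A_1$, and consequently $A_1$ is irreducible if and only if $\h(A)=A$. This matches the second clause verbatim.

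Finally I would stitch the three cases together to confirm the ``if and only if one of the following holds'' phrasing. The point is that for each fixed value of $b$ the appropriate clause is both necessary and sufficient, and the three clauses partition all of $\C$ according to whether $b\notin\{0,1\}$, $b=1$, or $b=0$; hence $A_b$ is irreducible precisely when the clause attached to the relevant value of $b$ is satisfied. A one-line proof reading ``This follows by combining Theorems 2, 4 and 5'' would suffice, though I would spell out the case split for clarity.

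I do not anticipate any real obstacle here, since all the mathematical work was done in the preceding theorems; the only thing to be careful about is the logical bookkeeping of the disjunction. In particular I would double-check that the phrase ``$b\ne 0$ or $1$'' in clause $(i)$ is read as $b\notin\{0,1\}$ (rather than as an inclusive ``or'' that might overlap the other clauses), so that the three clauses are genuinely mutually exclusive and jointly exhaustive over $b\in\C$; this guarantees the summarizing biconditional is well posed and that no case is double-counted or omitted.
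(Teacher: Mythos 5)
Your proposal is correct and matches the paper exactly: the paper introduces this theorem with the words ``Now we can summarize simplicity results on $\mathcal{W}_n$-module $A_b$ as follows'' and gives no separate proof, since cases $(i)$, $(iii)$, and $(ii)$ are precisely Theorems 2, 4, and 5 respectively. Your added care in reading clause $(i)$ as $b\notin\{0,1\}$ so that the three clauses are mutually exclusive and exhaustive over $b\in\C$ is exactly the right bookkeeping.
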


Next we will deal with the isomorphism problems between irreducible
$\mathcal{W}_n$-modules $A_b$ we just constructed.  In the case
$b\ne 0,1,$ we have the following

\begin{proposition}Let $b,b'\in \C$ with $ b\notin \{0,1\},$ and $A, A'$ be simple modules over the associative algebra
$\mathcal{K}_n$. Then $A_b\cong A'_{b'}$ as $\mathcal{W}_n$-modules if and
only if $b=b'$ and $A\cong A'$ as $\mathcal{K}_n$-modules.
\end{proposition}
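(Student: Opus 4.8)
The ``if'' direction is immediate. If $b=b'$ and $\psi\colon A\to A'$ is a $\K_n$-module isomorphism, then, since the $\W_n$-action on $A_b$ is given through $D_b(u,k)=D(u,k)+b(u|k)t^k$ entirely in terms of the $\K_n$-action and the scalar $b$, the same map $\psi$ intertwines the $\circ$-actions and is a $\W_n$-isomorphism $A_b\to A'_{b'}$.

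For the converse, let $\phi\colon A_b\to A'_{b'}$ be a $\W_n$-module isomorphism. The key point is that the single element
$$\Xi(u,v;i,k)=-\tfrac{1}{2}D(u,k-i)D(v,i)-\tfrac{1}{2}D(u,k+i)D(v,-i)+D(u,k)D(v,0)\in U(\W_n)$$
acts, via $\circ$, on $A_b$ as the operator $b(b-1)(u|i)(v|i)t^k$ and on $A'_{b'}$ as $b'(b'-1)(u|i)(v|i)t^k$, by Lemma 1 (here $t^k$ denotes the genuine $\K_n$-action). Fix $i\neq 0$ and $u,v$ with $(u|i)(v|i)\neq0$. Taking $k=0$ (so $t^0=1$) and using that $\phi$ commutes with the action of $\Xi(u,v;i,0)$, I obtain $b(b-1)=b'(b'-1)$; in particular $b'\notin\{0,1\}$, since otherwise the right-hand side would vanish while $b(b-1)\neq0$. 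For general $k$ the same intertwining, combined with $b(b-1)=b'(b'-1)\neq0$, yields $\phi(t^k v)=t^k\phi(v)$ for all $k\in\Z^n$ and $v\in A$; that is, $\phi$ intertwines the actions of all $t^k$.

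Next I observe that $D(u,0)\circ$ equals the genuine $\K_n$-operator $\sum_i u_i\partial_i$ (as $(u|0)=0$), so the $\W_n$-intertwining already forces $\phi\partial_i=\partial_i\phi$ for each $i$. Since $\K_n$ is generated as an associative algebra by $t_1^{\pm1},\dots,t_n^{\pm1},\partial_1,\dots,\partial_n$, and $\phi$ intertwines each of these operators, $\phi$ is in fact a $\K_n$-module isomorphism; hence $A\cong A'$ as $\K_n$-modules.

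It remains to prove $b=b'$, and this is the delicate step: the quadratic relation above only forces $b(b-1)=b'(b'-1)$, i.e. $b'\in\{b,\,1-b\}$, so the possibility $b'=1-b$ must be excluded by a genuinely degree-one argument. I expand the intertwining identity $\phi(D(u,k)\circ v)=D(u,k)\circ\phi(v)$ using $D_b$ and $D_{b'}$, together with the facts just established that $\phi$ commutes with the genuine $\K_n$-actions of $t^k$ and of $D(u,k)=t^k\sum_i u_i\partial_i$. After cancellation this leaves $(b-b')(u|k)\,t^k\phi(v)=0$ for all $u,k,v$. Choosing $k\neq0$ and $u$ with $(u|k)\neq0$, and using that $t^k$ acts invertibly on $A'$ while $\phi$ is surjective, I conclude $b=b'$. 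The main obstacle is exactly this last step: the Casimir-type element of Lemma 1 is quadratic in $b$ and cannot separate $b$ from $1-b$, so one is forced to exploit the linear part of the twisted action directly.
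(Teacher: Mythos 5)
Your proposal is correct and follows essentially the same route as the paper's proof: you use the Lemma~1 element $\Delta(u,v;i,k)$ at $k=0$ to get $b(b-1)=b'(b'-1)\neq 0$, then for general $k$ to show $\phi$ intertwines the $t^k$-actions, combine this with $D(u,0)\circ=\sum_i u_i\partial_i$ to conclude $\phi$ is a $\K_n$-isomorphism, and finally expand the degree-one identity $\phi(D(u,k)\circ v)=D(u,k)\circ\phi(v)$ to obtain $(b-b')(u|k)t^k\phi(v)=0$ and hence $b=b'$. The only cosmetic difference is that you make explicit the invertibility of $t^k$ and the reason the quadratic relation alone cannot separate $b$ from $1-b$, which the paper leaves implicit.
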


\begin{proof} The sufficiency is obvious. We consider the necessity. Let $\psi: A_b\cong A'_{b'}$ be an isomorphism between the two $\W_n$-modules. Let $i,k\in \Z^n, u,v\in \C^n$ and
$$\begin{aligned}\Delta(u,v;i,k)&=-\frac{1}{2}{D}(u,k-i){D}(v,i)
-\frac{1}{2}{D}(u,k+i){D}(v,-i)\\
&+{D}(u,k){D}(v,0).\end{aligned} $$
For $w\in A_b,$ we have
$$\begin{aligned}b'(b'-1)(u|i)(v|i)t^k\psi(w)&=\Delta(u,v;i,k)\circ \psi(w)\\
=\psi(\Delta(u,v;i,k) \circ w)
&=b(b-1)(u|i)(v|i)\psi(t^k w).\end{aligned}$$ Taking $k=0$ and $u, v, i$ such that  $(u|i)(v|i)\ne 0,$ then $b'(b'-1)=b(b-1)\ne 0.$ So $b'\ne 0,1,$ and $\psi(t^kw)=t^k\psi(w),k\in \Z^n.$ Since $$\psi(\partial_iw)=\psi(\partial_i\circ w)=\partial_i\circ \psi(w)=\partial_i\psi(w),\ 1\le i\le n,$$ $\psi$ is a $\mathcal{K}_n$-module homomorphism which must be an isomorphism.

From
$$\begin{aligned}(D(u,k)&+b'(u|k)t^k)\psi(w)=D(u,k)\circ \psi (w)=\psi(D(u,k)\circ w)\\
&=\psi((D(u,k)+b(u|k)t^k)(w))=(D(u,k)+b(u|k)t^k)\psi(w)
\end{aligned}$$ we get $(b'-b)(u|k)t^k\psi(w)=0$ for all $ w\in A_b, u\in \C^n,k\in \Z^n,$ yielding that
 $b'=b$. So the proposition holds.
\end{proof}

In the case $b=0,$ we have the following

\begin{proposition}Let  $A, A'$ be simple modules over the associative algebra
  $\mathcal{K}_n$. Then $A_0\cong A'_{0}$ as $\mathcal{W}_n$-modules if and
  only if  $A\cong A'$ as $\mathcal{K}_n$-modules.
\end{proposition}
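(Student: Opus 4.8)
The plan is to prove the necessity (the sufficiency being obvious) by assuming an isomorphism $\psi\colon A_0 \to A'_0$ of $\W_n$-modules and showing that it must already be a $\K_n$-module isomorphism. The essential difficulty compared to Proposition~7 is that when $b=0$ the twisting term vanishes, so the identity $D_0(u,k) = t^kD(u,0)$ holds, and we can no longer extract the action of $t^k$ from a quadratic expression $\Delta_b$ the way we did when $b(b-1)\neq 0$. Instead I would exploit that $D(u,0)=\sum_i u_i\partial_i$ lies in $\h$, so the $\W_n$-action restricted to $\h$ coincides with the $\K_n$-action of $\h$, and $\psi$ intertwines these: $\psi(\partial_i w)=\partial_i\psi(w)$ for all $i$ exactly as in the previous proof.

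First I would record that any $\W_n$-module isomorphism automatically commutes with $\h$, giving $\psi(\partial_i w)=\partial_i\psi(w)$. Next, the key step is to recover the action of each $t^k$. From the identity $D(u,k)\circ w = t^k D(u,0)w$ (valid at $b=0$), applying $\psi$ and using $\h$-equivariance yields
\begin{equation}
t^k\big(\partial_u\,\psi(w)\big) = \psi\big(t^k(\partial_u w)\big),
\end{equation}
where $\partial_u=\sum_i u_i\partial_i$. The obstacle is that this controls $t^k$ only on vectors of the form $\partial_u w$, i.e. on $\h(A)$, not on all of $A$. To bridge this gap I would invoke Lemma~3: since $A\ncong\A_n$ (guaranteed by Theorem~4, as $A_0$ is irreducible), there is an index $j$ with $\partial_j$ acting injectively, and more sharply the argument of Lemma~3 shows $\partial_j-k$ acts injectively for every $k\in\Z$; hence $\partial_j$ acting on $A$ is invertible onto its image in a way that lets every element be written through $\partial_j$. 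Concretely, because $A$ is simple and $\partial_j$ is injective, $\partial_j(A)$ is a nonzero $\K_n$-submodule whose $\K_n$-span is all of $A$, so every $y\in A$ is a $\K_n$-combination of elements $\partial_j(v)$ — precisely the mechanism already used in the proof of Theorem~4.

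Using this, I would argue that $\psi$ commutes with $t^k$ on all of $A$: write an arbitrary vector in terms of $\partial_j$-images, push $t^k$ through using the displayed intertwining relation together with $\h$-equivariance, and conclude $\psi(t^k w)=t^k\psi(w)$ for all $w$ and all $k\in\Z^n$. Once $\psi$ is shown to commute with every $\partial_i$ and every $t^k$, it commutes with the generators of $\K_n=\C[t_1^{\pm1},\dots,t_n^{\pm1},\partial_1,\dots,\partial_n]$, hence is a $\K_n$-module homomorphism; being a bijection, it is a $\K_n$-module isomorphism, so $A\cong A'$. I expect the main obstacle to be the step transferring control of $t^k$ from $\h(A)$ to all of $A$: one must verify carefully, via Lemma~3 and simplicity, that the injectivity of $\partial_j$ really does allow every element to be reached, so that the intertwining relation for $t^k$ propagates from $\partial_j$-images to the whole module.
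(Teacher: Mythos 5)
Your proof covers only part of the proposition, and the omission is a genuine gap. The hypothesis is merely that $A$ and $A'$ are simple $\K_n$-modules; it is \emph{not} assumed that $A_0$ is an irreducible $\W_n$-module, and by Theorem 4 irreducibility fails exactly when $A\cong\A_n$. Your parenthetical ``guaranteed by Theorem 4, as $A_0$ is irreducible'' therefore assumes something not given, and the case $A\cong\A_n$ is left unproved --- precisely the case where Lemma 3 is unavailable (on $\A_n$ every $\partial_i-l$ kills the monomials $t^r$ with $r_i=l$), so your whole mechanism breaks down there. The paper disposes of this case first by a soft argument: if $A\cong\A_n$ then $A_0$ is reducible, hence so is $A'_0\cong A_0$, and the ``only if'' direction of Theorem 4 forces $A'\cong\A_n\cong A$; only then may one assume $A\ncong\A_n$ and $A'\ncong\A_n$. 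A smaller but real slip in the same passage: $\partial_j(A)$ is \emph{not} a $\K_n$-submodule (since $[\partial_j,t_j]=t_j$ it is not $t_j$-stable); what you actually need, and what simplicity gives, is that $\K_n\,\partial_j(v)=A$ for a single $v$ with $\partial_j v\neq 0$.

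Apart from this, your route is viable and genuinely different from the paper's. The paper works on the \emph{target}: it picks $i$ with $\partial_i-l$ injective on $A'$, notes $(\partial_i-k_i)t^k=t^k\partial_i=D(e_i,k)\in\W_n$, obtains $(\partial_i-k_i)\phi(t^kv)=(\partial_i-k_i)\bigl(t^k\phi(v)\bigr)$, and cancels the injective operator --- a one-line argument. Your propagation step, the one you flag as the main obstacle, does close, via the commutation $t^kf(\partial)=f(\partial-k)\,t^k$ in $\K_n$: writing an arbitrary $y=\sum_\alpha f_\alpha t^{k_\alpha}\partial_j v$ with $f_\alpha\in U(\h)$ (the mechanism of Theorem 4), one gets
\begin{equation*}
t^k y=\sum_\alpha f_\alpha(\partial-k)\,D(e_j,k+k_\alpha)\,v,
\qquad
t^k\psi(y)=\sum_\alpha f_\alpha(\partial-k)\,D(e_j,k+k_\alpha)\,\psi(v),
\end{equation*}
and since $\psi$ intertwines $U(\h)$ and each $D(e_j,s)$, this yields $\psi(t^ky)=t^k\psi(y)$ for all $y$; note injectivity is then needed only on the source (and only to ensure $\partial_j v\neq0$), whereas the paper needs it on $A'$. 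So your approach trades the paper's short cancellation for a slightly longer generation argument with marginally weaker input; but as submitted, the unhandled case $A\cong\A_n$ must be added for the proof to be complete.
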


\begin{proof} The sufficiency is obvious. We only need to show the necessity. If $A\cong \A_n$ as $\K_n$-modules, by Theorem 4,
$A_0$ is not an irreducible $\mathcal{W}_n$-module. So $A'_0$ is not an irreducible $\mathcal{W}_n$-module. From  Theorem 4 again we see that $A'\cong \A_n$ as $\K_n$-modules, i.e., $A\cong A'$ as $\K_n$-modules.

Now we assume that  $A\ncong \A_n$ and $A'\ncong \A_n$ as $\K_n$-modules.
 From Lemma 3 we know that there exists $i$ such that $\partial_i-l$ acts injectively on $A'$ for all $l\in \Z.$ Let $v\in A_0$ be any nonzero element and let $k=(k_1,\cdots,k_n)\in \Z^n$. Let $\phi $ be a $\mathcal{W}_n$-module isomorphism from $A_0$ to $A'_0.$ We have $t^k\partial_j(\phi (v))=t^k\partial_j\circ \phi (v)=\phi (t^k\partial_j\circ v)=\phi (t^k\partial_j(v)).$ In particular,  $\phi (\partial_jv)=\partial_j\phi (v)$ for all $1\le j\le n. $ Then
    $$\begin{aligned}(\partial_i-k_i)\phi (t^kv)
      &=\phi (((\partial_i-k_i)t^k)(v))
      =\phi (t^k\partial_iv)=t^k\partial_i \phi (v)\\
      &=(\partial_i-k_i)(t^k\phi (v)),\end{aligned}$$ yielding that $\phi (t^kv)=t^k\phi (v).$ Therefore, $\phi $ is a $\mathcal{K}_n$-module isomorphism. This completes the proof.
\end{proof}

\

For $b=1,$ we have

\begin{proposition}Let  $A, A'$ be irreducible modules over the associative algebra
$\mathcal{K}_n$. Then $A_1\cong A'_{1}$ as $\mathcal{W}_n$-modules if and
only if  $A\cong A'$ as $\mathcal{K}_n$-modules.\end{proposition}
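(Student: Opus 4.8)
The plan is to dispose of sufficiency at once and then prove necessity by showing that any $\mathcal{W}_n$-module isomorphism $\psi\colon A_1\to A'_1$ must in fact respect the whole $\mathcal{K}_n$-structure, treating the natural module $\A_n$ as a separate (and easy) case. For sufficiency, observe that the twisted action on $A_b$ is given by the single universal formula $D(u,k)\circ v=(D(u,k)+(u|k)t^k)v$ in terms of the underlying $\mathcal{K}_n$-action; hence any $\mathcal{K}_n$-module isomorphism $A\to A'$ is simultaneously a $\mathcal{W}_n$-module isomorphism $A_1\to A'_1$.

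For necessity, let $\psi\colon A_1\to A'_1$ be a $\mathcal{W}_n$-isomorphism. The first step is to notice that $\partial_i=D(e_i,0)$ acts on $A_1$ with no twist, since $(e_i|0)=0$; thus the $\h$-action on $A_1$ is literally the natural $\partial_i$-action on $A$, and $\psi$ intertwines these: $\psi(\partial_i v)=\partial_i\psi(v)$ for every $i$. In particular $\psi$ restricts to a linear bijection from $\bigcap_i\ker\partial_i$ in $A$ onto $\bigcap_i\ker\partial_i$ in $A'$. I would then record the characterization extracted from the proof of Lemma 3: for an irreducible $\mathcal{K}_n$-module $A$ one has $A\cong\A_n$ if and only if $\bigcap_i\ker\partial_i\ne 0$, a nonzero common kernel vector $v_0$ giving an isomorphism $\A_n\to A$ via $t^k\mapsto t^k v_0$. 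Combined with the previous sentence this yields $A\cong\A_n\iff A'\cong\A_n$, so no mixed case can occur.

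The argument then splits into two cases. If both $A$ and $A'$ are isomorphic to $\A_n$, then $A\cong A'$ is immediate and we are done. If neither is, I would transport the $t^k$-action using the relation $D(e_j,k)\circ v=\partial_j(t^k v)$ coming from the proof of Theorem 5: applying $\psi$ and using that it commutes with $\partial_j$ gives $\partial_j\big(\psi(t^k v)-t^k\psi(v)\big)=0$ for all $j$, so the difference $\psi(t^k v)-t^k\psi(v)$ lies in $\bigcap_j\ker\partial_j$ in $A'$. Since $A'\ncong\A_n$, Lemma 3 supplies an index $i$ with $\partial_i$ acting injectively on $A'$, forcing this intersection to be zero; hence $\psi(t^k v)=t^k\psi(v)$ for all $k$. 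As $\psi$ now commutes with every $t^k$ and every $\partial_i$, and these generate $\mathcal{K}_n$, it is a $\mathcal{K}_n$-module isomorphism, giving $A\cong A'$.

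I expect the module $\A_n$ to be the only real obstacle: precisely when $A'\cong\A_n$ the intersection $\bigcap_j\ker\partial_j$ is nonzero, so one cannot read off the $t^k$-action from the $\mathcal{W}_n$-action, and the computation of the third paragraph collapses. The role of the second paragraph is exactly to quarantine this case, showing that $A\cong\A_n$ can only be matched with $A'\cong\A_n$, where the conclusion needs no further work.
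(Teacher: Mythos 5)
Your proof is correct and takes essentially the same route as the paper's: use that $\partial_i=D(e_i,0)$ acts untwisted so the isomorphism intertwines the $\h$-action, split into the cases $A\cong\A_n$ and $A\ncong\A_n$, and in the latter recover the $t^k$-action from the identity $D(e_j,k)\circ v=\partial_j(t^kv)$ of (2.2) plus an injectivity argument. The only (harmless) variations are that you make explicit, via the bijection of common kernels $\bigcap_j\ker\partial_j$, the symmetry the paper leaves implicit when it applies Lemma 3 to $A'$ in its Case 2, and that you need only the vanishing of this common kernel rather than the paper's stronger conclusion from Lemma 3 that some single $\partial_i-l$ acts injectively for all $l\in\Z$.
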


\begin{proof}
  The sufficiency is obvious. We only need to show the necessity. We consider the two cases separately:
   either $A\cong \A_n$ or $A\ncong \A_n.$

  {\bf{Case 1.}} $A\cong \A_n$ as $\K_n$-modules.

   In this case,   let $v=1\in A_1$. We see that $\partial_j v=\partial_j\circ v =0$ for all $ 1\le j\le n.$ Since $A_1\cong A'_1$,
    $A'_1$ has a nonzero element $v'\in A'$ such that
   $\partial_j v'=\partial_j\circ v'=0.$ The same arguments used in the proof of Lemma 3
   shows
   that $A'\cong \A_n\cong A$ as $\K_n$-modules.

  {\bf{Case 2.}} $A\ncong \A_n$ as $\K_n$-modules.

   From Lemma 3 we know that there exists some $i, 1\le i\le n$ such that $\partial_i-l$ acts injectively
   on $A'$ for all $l\in \Z.$ Let $\tau: A_1\rightarrow A_1'$ be a $\W_n$-module isomorphism. Clearly,
  $$\tau(\partial_jv)=\tau(\partial_j\circ v)=\partial_j\circ\tau(v)=\partial_j\tau(v), \forall\ v\in A, 1\le j\le n.$$
  Also we have
  $$\begin{aligned}\partial_i (\tau(t^kv))
  =&\tau(\partial_i\circ t^kv)=\tau((\partial_it^k)v)=\tau(t^k(\partial_i+k_i)v)
  =\tau(t^k\partial_i\circ v)\\ =&(t^k\partial_i)\circ \tau(v)=t^k(\partial_i+k_i) \tau(v)
  =\partial_i(t^k\tau(v)),\end{aligned}$$ for all $ v\in A, k=(k_1,\cdots,k_n)\in \Z^n.$ We deduce that
    $\tau(t^kv)=t^k\tau(v).$ So $\tau$ is a $\K_n$-module homomorphism between two simple modules,
   which must be an isomorphism. Thus $A\cong A',$ as desired.
 \end{proof}

\begin{proposition} Let  $n\ge 2$, and $A, A'$ be irreducible modules over the associative algebra
$\mathcal{K}_n$. Then $A_0\ncong A'_{1}$ as $\mathcal{W}_n$-modules.\end{proposition}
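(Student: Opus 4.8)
The plan is to assume a $\mathcal{W}_n$-module isomorphism $\psi\colon A_0\to A_1'$ exists and then transport the entire $\mathcal{K}_n$-multiplication from $A'$ back to $A$ along $\psi$, producing an operator identity on $A$ that contradicts the simplicity of $\mathcal{K}_n$. The starting observation is that, writing $\partial_u=\sum_i u_i\partial_i=D(u,0)$, the element $D(u,k)$ acts on $A_0$ as the operator $t^k\partial_u$ and on $A_1'$ as $\partial_u t^k=t^k\partial_u+(u|k)t^k$ (the latter being exactly (2.2)). In particular, for $k=0$ both twists vanish because $(u|0)=0$, so $\psi$ commutes with the natural action of every $\partial_u$, i.e. $\psi\partial_u=\partial_u\psi$. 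The full intertwining property then reads $\psi\,(t^k\partial_u)=(\partial_u t^k)\,\psi$ for all $u\in\C^n$, $k\in\Z^n$.

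Next I would transport the multiplication operators. Since $t^kt^{-k}=1$ in $\mathcal{K}_n$, the element $t^k$ acts invertibly on any $\mathcal{K}_n$-module, so $T_k:=\psi^{-1}t^k\psi$ is a well-defined invertible operator on $A$. Conjugating the two facts above by $\psi$ yields, purely as identities of operators on $A$, first $\partial_u T_k=t^k\partial_u$ (from the intertwining property), and second $[\partial_i,T_k]=k_iT_k$ (conjugate the relation $[\partial_i,t^k]=k_it^k$ valid on $A'$, using $\psi\partial_i=\partial_i\psi$). These two relations encode the entire content of the supposed isomorphism.

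Finally I would set $\Phi_k:=T_k-t^k$ and read off two consequences on $A$: a one-line computation using $\partial_it^k=t^k\partial_i+k_it^k$ gives $\partial_i\Phi_k=-k_it^k$, while $T_k\partial_u=\partial_uT_k-[\partial_u,T_k]=t^k\partial_u-(u|k)T_k$ gives $\Phi_k\partial_u=-(u|k)T_k$. Choosing any $u$ with $(u|k)\neq0$, the right-hand side of the latter is invertible, hence $\Phi_k$ is surjective. Now I use $n\ge2$ to pick distinct indices $i\neq j$ and set $k=e_i+e_j$; then $\partial_i\Phi_k=\partial_j\Phi_k=-t^k$, so $(\partial_i-\partial_j)\Phi_k=0$, and surjectivity of $\Phi_k$ forces $\partial_i-\partial_j=0$ as an operator on $A$. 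But $\partial_i-\partial_j$ is a nonzero element of the simple algebra $\mathcal{K}_n$, so its annihilator is a proper two-sided ideal, hence zero, and it cannot kill the nonzero module $A$ — a contradiction, proving $A_0\ncong A_1'$.

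The main obstacle is conceptual rather than computational: the twists $b=0$ and $b=1$ agree ``to second order'' (the bilinear combination $\Delta_b$ of Lemma 1 vanishes for both, and the symmetric quadratic products in the $D(u,k)$ also coincide), so no single low-degree element of $U(\mathcal{W}_n)$ separates $A_0$ from $A_1'$. The idea that unlocks the argument is to abandon the search for such an element and instead transport the whole $t^k$-action through $\psi$, exploiting its invertibility together with the simplicity of $\mathcal{K}_n$; the hypothesis $n\ge2$ enters precisely where two distinct coordinate directions are needed to form $k=e_i+e_j$. Since every displayed identity is an immediate manipulation of the commutation relations in $\mathcal{K}_n$, the proof is uniform and needs no case analysis on the structure of $A$ or $A'$.
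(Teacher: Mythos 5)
Your proof is correct, and it takes a genuinely different route from the paper's. The paper argues directly with the isomorphism $\sigma:A_0\to A'_1$ on module elements and splits into two cases according to whether $A'\cong\mathcal{A}_n$: in the first case it shows $A\cong\mathcal{A}_n$ too and then produces, from the vector $1\in A$, the conclusion $\mathfrak{h}(A')=0$, a contradiction; in the second it invokes Lemma 3 to get an index $i$ with $\partial_i-l$ injective on $A'$ for all $l\in\Z$, deduces $\sigma(t^{k'}v)=t^{k'}\sigma(v)$ for multi-indices $k'$ with $k'_i=0$, and then, choosing $j\ne i$ with $k'_j\ne0$ (this is where the paper uses $n\ge2$), derives $k'_j\sigma(t^{k'}v)=0$ and hence $\sigma(v)=0$, absurd. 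You instead transport the whole $t^k$-action through $\psi$ via $T_k=\psi^{-1}t^k\psi$; I verified your identities: $\psi$ does commute with every $\partial_u$ since both twists vanish at $k=0$, the intertwining relation does give $\partial_u T_k=t^k\partial_u$, conjugating $[\partial_i,t^k]=k_it^k$ gives $[\partial_i,T_k]=k_iT_k$, and then $\partial_i\Phi_k=-k_it^k$ and $\Phi_k\partial_u=-(u|k)T_k$ follow for $\Phi_k=T_k-t^k$, so $\Phi_k$ is surjective, and with $k=e_i+e_j$ (your use of $n\ge2$) the identity $(\partial_i-\partial_j)\Phi_k=0$ forces $\partial_i-\partial_j$ to annihilate $A$, contradicting the simplicity of $\mathcal{K}_n$ (the annihilator of a nonzero unital module is a proper two-sided ideal, hence zero). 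What each approach buys: yours is uniform — no case analysis, no appeal to Lemma 3 — and it never uses irreducibility of $A$ or $A'$, only that they are nonzero, so it actually proves the stronger statement that $A_0\ncong A'_1$ for arbitrary nonzero $\mathcal{K}_n$-modules; the paper's proof, by contrast, uses only elementary manipulations of module elements plus Lemma 3, and its case split on $A'\cong\mathcal{A}_n$ is the same template used in Propositions 8 and 9, which keeps the whole sequence of isomorphism results stylistically and logically parallel.
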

\begin{proof}
  To the contrary, assume that
  $\sigma: A_0\rightarrow A'_1$ is an isomorphism of the $\mathcal{W}_n$-modules. We consider the two cases separately:
   either $A'\cong \A_n$ or $A'\ncong \A_n$.

  {\bf{Case 1.}} $A'\cong \A_n$ as $\K_n$-modules.

  The same arguments used in the proof of Case 1 in Proposition 9 shows that $A\cong \A_n$ as $\K_n$-modules.
  Let $v_0=1\in A$ and $v_0'=\sigma(v_0).$ For all $k\in \Z^n$ and all $j, 1\le j\le
  n,$ we have
  $$0=\sigma(t^k\partial_j v_0)=\sigma(t^k\partial_j\circ v_0)=t^k\partial_j\circ v'_0=t^k(\partial_j+k_j)v'_0=\partial_jt^kv_0',$$
  which means that $\h(A')=0.$ This is impossible since $A'\cong \A_n$ as $\K_n$-modules.

  {\bf{Case 2.}} $A'\ncong \A_n$ as $\K_n$-modules.

  In this case, there exists some $i, 1\le i\le n$ such that $\partial_i-l$ acts injectively
  on $A'$ for all $l\in \Z.$ Since $n\ge 2,$ there exists $j, 1\le j\le n$ with $j\ne i$.
  For any $v\in A$ and $k'=(k_1,\cdots, k_n)$ with $k_i=0$, we have
    $$\partial_i\sigma(t^{k'}v)=\sigma(\partial_it^{k'}v)=\sigma(t^{k'}\partial_iv)=t^{k'}\partial_i\circ \sigma(v)=
    \partial_it^{k'}\sigma(v),$$
    which implies that $\sigma(t^{k'}v)=t^{k'}\sigma(v).$ We deduce
    that
    $$(\partial_j-k_j) \sigma(t^{k'}v)=(\partial_j-k_j)\circ \sigma(t^{k'}v)=\sigma((\partial_j-k_j)\circ t^{k'}v)
    =\sigma((t^{k'}\partial_j)\circ v)$$
    $$=t^{k'}\partial_j\circ\sigma(v)=t^{k'}(\partial_j+k_j)\sigma(v)=\partial_jt^{k'}\sigma(v)=\partial_j\sigma(t^{k'}v),
    $$which means that $k_j\sigma(t^{k'}v)=k_jt^{k'}\sigma(v)=0.$
    Thus $\sigma(v)=0$, which is impossible. Therefore,
    $A_0\ncong A'_1.$ This completes the proof.
\end{proof}

Now we can summarize isomorphism results as follows (we exclude the case of $n=1$ which was Theorem 12 in \cite{LZ1}).

\begin{theorem}\label{iso} Suppose that  $b, b'\in \C$, $n\ge 2$, and $A$ and $A'$ are
 irreducible modules over the associative algebra  $\mathcal
{K}_n$. Then $A_b\cong A'_{b'}$ as $\mathcal{W}_n$-modules if and
only if $b=b_1$ and $A\cong A'$ as $\mathcal {K}_n$-modules.
\end{theorem}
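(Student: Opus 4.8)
The plan is to deduce this consolidation theorem directly from the four preceding propositions by a case analysis on the pair $(b,b')$, organized according to whether each of $b,b'$ lies in $\{0,1\}$; essentially no new computation is required, and the conclusion $b=b'$ (the $b_1$ in the statement being a misprint for $b'$) together with $A\cong A'$ falls out of the propositions case by case. The sufficiency is immediate: if $b=b'$ and $\phi\colon A\to A'$ is a $\mathcal{K}_n$-module isomorphism, then for all $u\in\C^n$, $k\in\Z^n$ and $v\in A$ we have $\phi(D_b(u,k)\circ v)=\phi\big((D(u,k)+b(u|k)t^k)v\big)=(D(u,k)+b(u|k)t^k)\phi(v)=D_b(u,k)\circ\phi(v)$, so the same map $\phi$ is a $\mathcal{W}_n$-module isomorphism $A_b\cong A'_{b'}$.

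For the necessity I would assume $A_b\cong A'_{b'}$ as $\mathcal{W}_n$-modules and split into cases. If $b\notin\{0,1\}$, then Proposition 7 applies verbatim and already yields $b=b'$ together with $A\cong A'$. If instead $b'\notin\{0,1\}$, I apply Proposition 7 to the inverse isomorphism $A'_{b'}\cong A_b$, interchanging the roles of $(b,A)$ and $(b',A')$, to reach the same conclusion. These two applications of Proposition 7 dispose of every pair with $\{b,b'\}\nsubseteq\{0,1\}$, so it remains to treat $b,b'\in\{0,1\}$. Here there are four subcases: when $b=b'=0$, Proposition 8 gives $A\cong A'$; when $b=b'=1$, Proposition 9 gives $A\cong A'$; and the two \emph{mixed} subcases $b=0,b'=1$ and $b=1,b'=0$ are excluded by Proposition 10 (and its symmetric counterpart obtained by swapping roles), which guarantees $A_0\ncong A'_1$, contradicting the standing hypothesis. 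In every surviving subcase we obtain $b=b'$ and $A\cong A'$, completing the proof.

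Since the structural content has already been established in Propositions 7--10, there is no serious obstacle; the only point requiring care is to verify that the case division exhausts all of $\C^2$ exactly once, which the split via membership in $\{0,1\}$ achieves. It is worth recording that the sole genuinely substantive ingredient is Proposition 10, and this is exactly where the hypothesis $n\ge 2$ enters: for a module $A'\ncong\A_n$ one uses an index $j\ne i$ to detect the discrepancy between the twists $b=0$ and $b'=1$. This is why $n=1$ is excluded from the present statement and is instead covered by Theorem 12 of \cite{LZ1}.
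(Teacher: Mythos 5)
Your proposal is correct and coincides with the paper's own treatment: the paper states this theorem as a direct summary of Propositions 7--10 with no separate argument, and your case analysis (Proposition 7 when $b\notin\{0,1\}$ or, via the inverse isomorphism, when $b'\notin\{0,1\}$; Propositions 8 and 9 for $b=b'=0$ and $b=b'=1$; Proposition 10 and its symmetric counterpart to exclude the mixed cases) is exactly the intended proof. You are also right that ``$b=b_1$'' in the statement is a misprint for ``$b=b'$'' and that the hypothesis $n\ge 2$ enters only through Proposition 10.
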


We like to point out that, unlike Theorem 12 in \cite{LZ1}, when $n\ge2$ we do not have $\W_n$-module isomorphism $(\h(A))_1\cong A_0$ for any irreducible $\K_n$-module $A$.

Before giving some concrete examples, we need first recall more results from \cite{LZ1}. All simple modules over the associative algebra $\K=\C[t^{\pm 1},\partial]$ were given in Lemmas 2 and 3 of \cite{LZ1}.
 More precisely,
 let $V$ be a simple $\K=\C[t^{\pm 1},\partial]$-module. Then  either  $$V\cong \mathcal{K}/(\mathcal{K}\cap \C(t)[\partial]\beta)$$ for an irreducible element $\beta$   in  the associative algebra $\C(t)[\partial]$ where $\C(t)$ is the fraction field of   $\C[t]$, or $V\cong \Omega(\lambda)=\C[\partial]$ for $\lambda\in \C^*$ where $\C[\partial]$ is the polynomial algebra in $\partial$, and the action of $\mathcal{K}$ on $\Omega(\lambda)$ is defined by
\begin{equation}t^j\partial^k=\lambda^j(\partial-j)^k,\ \partial\partial^k=\partial^{k+1},\forall\,\, k\in \Z_+, \ j\in \Z.\end{equation}

We remark that the classification for all irreducible modules
over  the associative algebras $\K_n$ with $n\ge 2$ is still unsolved.

Let $n\ge 2$ be an integer,   $V_1,V_2,\cdots, V_n$  be irreducible modules
over  the associative algebras $K_1, K_2, ..., K_n$
 respectively. Then  $A=V_1\otimes\cdots\otimes  V_n$ is a  simple module over the associative algebra $\mathcal{K}_n$.
Consequently,  we obtain a lot of  irreducible  $\mathcal{W}_n$-module ${A}_b$ except for very few cases (see Theorem 5).

\

\noindent{\bf{Example 2.}} Let $\alpha_i\in \C[t_i^{\pm 1}], \beta_i=\partial_i-\alpha_i, 1\le i\le n.$
 We have the irreducible $K_i$-module
$$V_i= K_i/(K_i\cap(\C(t_i)[\partial_i]\beta_i))=K_i/K_i\beta_i,$$
which has a basis $\{t_i^{k_i}: k_i\in \Z\}$ where we have identified $t_i^{k_i}$ with $t_i^{k_i}+K_i\beta_i.$
The action of $K_i$ is given by
$$\partial_i\cdot t_i^{k_i}=t_i^{k_i}(\alpha_i+k_i), t_i^{r_i}\cdot t_i^{k_i}=t_i^{r_i+k_i},\ \forall k_i,r_i\in \Z.$$
So the corresponding $\K_n$-module $A=V_1\otimes\cdots\otimes V_n$
is irreducible. Denote by ${\alpha}=(\alpha_1\cdots,\alpha_n)\in
\C[t_1^{\pm 1}]\times\cdots \times\C[t_n^{\pm 1}]$ and consider
$\C^n$ as a linear subspace of $\C[t_1^{\pm 1}]\times\cdots
\times\C[t_n^{\pm 1}]$. For any $b\in \C$ the action of  $\W_n$ on
the module $A_b$ is given by
$$D(u,r)\circ t^k=(u|{\alpha}+k+br)t^{r+k},\ \forall r,k\in \Z^n, u\in \C^n,$$
where  $(u|{\alpha})$ is defined as usual. If $b\notin \{0,1\}$, the
$\W_n$-module $A_b$ is irreducible. Clearly, $V\ncong \A_n$ if and
only if there exists
 some $i$ such that $\alpha_i\notin \Z$, so $A_0$ is irreducible if and only if $\alpha_i\notin \Z$ for some $i$. And $\h(A)=A$ if and only if there exists some $\alpha_i\in \C\backslash \Z$. Thus $A_1$ is an irreducible $\W_n$-module if and only if $\alpha_i\in \C\backslash\Z$ for some $i$. The module $A_b$ is a weight module if and only if  all $\alpha_i\in \C.$ If $A_b$ is a weight module, then each weight space is finite dimensional (see \cite{BF, Z}).

\

\noindent{\bf{Example 3.}} Let $\lambda_i\in \C^*, V_i=\Omega(\lambda_i), 1\le i\le n.$ Then $A=V_1\otimes \cdots\otimes V_n=\C[\partial _1,\partial _2,,...,\partial _n]$ is an irreducible $\K_n$-module. For any $b\in \C$ the action of $\W_n$ on $A_b$ is given by
$$D(u, j)\circ(\prod_{i=1}^n\partial_i^{k_i})=\lambda^j(\sum_{i=1}^nu_i\partial_i+(b-1)(u|j))\prod_{i=1}^n(\partial_i-j_i)^{k_i},$$
  for all $u\in\C^n, k \in\Z_+^n,   j\in\Z^n$, where $\lambda^j=\lambda_1^{j_1}\lambda_2^{j_2}...\lambda_n^{j_n}$.
Clearly, $A\ncong \A_n$ as $\K_n$-modules. Hence $A_b$ is an irreducible $\W_n$-module for all $b\in \C, b\ne 1.$ And $\h(A)\ne A$ implies that $A_1$ is reducible over $\W_n$ and $\h(A)$ is an irreducible submodule of $A_1$. For any $b\in \C$, $A_b$ is a non-weight $\W_n$-module.

\

\noindent{\bf{Example 4.}} Let $\lambda\in \C^*, \alpha\in \C\backslash \Z,$ then $V_1=\Omega(\lambda), V_2=K_2/K_2(\partial_2-\alpha)$ are irreducible modules over the associative algebras $K_1, K_2$ respectively. Take $\alpha_0,\alpha_1,\cdots, \alpha_n, a_1,\cdots,a_n\in \C^*, a_0=0$. Set $\beta=\partial_3-\sum_{i=0}^n\frac{\alpha_i}{t_3-a_i}$. Then we have the irreducible $K_3$-module
$$V_3=K_3/(K_3\cap\C(t_3)[\partial_3]\beta)=\C[t_3^{\pm 1}, (t_3-a_i)^{-1}|i=0,1,\cdots, n].$$ The action of $K_3$ on $V_3$ is given by
$$\begin{aligned}&\partial_3\cdot f(t_3)=\partial_3(f(t_3))+f(t_3)\sum_{i=0}^{n}\frac{\alpha_i}{t_3-a_i},\\
&t_3^s\cdot f(t_3)=t_3^sf(t_3),\ s\in \Z, f\in V_3\end{aligned}
$$(see \cite{GLZ2}).

 The $\K_3$-module $A=V_1\otimes V_2\otimes V_3$ satisfies $A\ncong \A_3$ and $\h(A)=A$. So for any $b\in \C$ the non-weight $\W_3$-module $A_b$ is irreducible. The action of $\W_3$ on $A_b$ is given by
 $$\begin{aligned}D(u,&r)\circ \partial_1^{k_1}\otimes t_2^{k_2}\otimes (t_3-a_i)^{k_3}\\
 &=u_1(\partial_1-r_1+br_1)\big(\lambda_1^{r_1}(\partial_1-r_1)^{k_1}\big)\otimes t_2^{r_2+k_2}\otimes t_3^{r_3}(t_3-a_i)^{k_3}\\
 &+\big(\lambda_1^{r_1}(\partial_1-r_1)^{k_1}\big)\otimes u_2(\alpha_2+k_2+br_2)t_2^{r_2+k_2}\otimes t_3^{r_3}(t_3-a_i)^{k_3}\\
  &+\big(\lambda_1^{r_1}(\partial_1-r_1)^{k_1}\big)\otimes t_2^{r_2+k_2}\otimes f(t_3) t_3^{r_3}(t_3-a_i)^{k_3},\end{aligned}
$$ where $f(t_3)=u_3\big(br_3+k_3t_3(t_3-a_i)^{-1}
 +\sum_{j=0}^n\frac{\alpha_1}{t_3-a_j}\big),$
$u=(u_1,u_2,u_3)\in \C^3, r=(r_1,r_2,r_3)\in \Z^3, k_1\in \Z_+, k_2\in \Z,$ and if $i=0$, then $k_3\in \Z,$ if $1\le i\le n,$ then $ k_i\in -\N.$

\

In the above three examples, all the $\K_n$-modules $A$ are product of $K_i$-modules. The next example is different.

\

\noindent{\bf{Example 5.}} Let $n\ge 2, \beta_i=\partial_i-t_1 t_2 \cdots t_n\in\K_n.$
 It is easy to prove that  the $\K_n$-module
$A= \K_n/(\K_n\beta_1+ ...+ \K_n\beta_n)$ is irreducible which is not a product $V_1\otimes V_2\otimes...\otimes V_n$ of any $K_i$-modules $V_i$.
We have the natural vector space isomorphism $A\cong\A_n$, but as $\K_n$-modules they are not isomorphic. It is easy to see that
$\h(A)$ has codimension $1$. So we obtain irreducible $\W_n$-modules $A_b$ for  $b\in\C$ with $b\ne1$.

\

\section{ Irreducible modules over $\mathfrak{sl}_{n+1}(\C)$}

Since Block \cite{Bl} gave a complete classification for irreducible modules over $\mathfrak{sl}_{2}(\C)$, we will assume $n\ge 2$ in this section, i.e., we will study irreducible (non-weight) modules over  $\mathfrak{sl}_{n+1}(\C)$  with $n\ge 2$ by restricting  irreducible $\mathcal{W}_n$-modules constructed in the previous section.  We will mainly study the  $\mathfrak{sl}_{n+1}(\C)$ -module structure on the
irreducible $\mathcal{W}_n$-modules $A_b$ for  $b\in\C$ and simple $\K_n$- modules $A=V_1\otimes\cdots\otimes  V_n$ where
$V_1,V_2,\cdots, V_n$  are irreducible modules over  the associative algebras $K_1, K_2, ..., K_n$ respectively.

For any $b\in\C, \lambda_i\in \C^*, 1\le i\le n$, we have the irreducible module $\Omega_i(\lambda_i)=\C[\partial_i]$ over the associative algebra $K_i$ as follows:
$$t_i^j\partial^l_i  (\partial_i^k)=\lambda_i^{j} (\partial_i-j)^{k+l}, \ j\in \Z, l,k\in \Z_+.$$
Then $\Omega(\lambda_1, \lambda_2,...\lambda_n)=\Omega_1(\lambda_1)\otimes\Omega_2(\lambda_2)\otimes...\Omega_n(\lambda_n)$ is an
 irreducible module over  the associative algebra $\K_n=K_1\otimes K_2\otimes...\otimes K_n$. We obtain the $\mathcal{W}_n$-module $\Omega_b(\lambda_1, \lambda_2,...\lambda_n)=(\Omega(\lambda_1, \lambda_2,...\lambda_n))_b$. The action of $\mathcal{W}_n$ on $\Omega_b(\lambda_1, \lambda_2,...\lambda_n)$ is as follows
  \begin{equation}D(u, j)\circ(\prod_{i=1}^n\partial_i^{k_i})=\lambda^j(\sum_{i=1}^nu_i\partial_i+(b-1)(u|j))\prod_{i=1}^n(\partial_i-j_i)^{k_i},\end{equation}
  for $u\in\C^n, k \in\Z_+^n,   j\in\Z^n$. From Example 3 we know that $\Omega_b(\lambda_1, \lambda_2,...\lambda_n)$ is an irreducible $\mathcal{W}_n$-module if and only if $b\ne1$.
  From now on we will consider the $\mathcal{W}_n$-module $\Omega_b(\lambda_1, \lambda_2,...\lambda_n)$ as an  $\mathfrak{sl}_{n+1}(\C)$-module by the embedding (1.1).  We will first prove

\begin{theorem} Let $n\ge2$ be a positive integer and let $a, \lambda_i\in \C^*, 1\le i\le n$  with $a\notin -\frac{1}{n+1}\Z_+$.  Then $\Omega_{1-a}(\lambda_1, \lambda_2,...\lambda_n)$ is  an irreducible  module over $\mathfrak{sl}_{n+1}(\C)$.
\end{theorem}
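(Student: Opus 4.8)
The plan is to exploit the fact that the Cartan acts by multiplication, which turns every submodule into an ideal, and then to run a degree‑reduction argument whose only obstruction is an arithmetic condition on $a$. Write $\Omega=\Omega_{1-a}(\lambda_1,\dots,\lambda_n)=\C[\partial_1,\dots,\partial_n]$, regarded as a polynomial ring in the commuting variables $\partial_1,\dots,\partial_n$. Since $\partial_i=D(e_i,0)$ acts by $\partial_i\circ f=\partial_i f$, the Cartan $\h=\span\{\partial_1,\dots,\partial_n\}$ acts by multiplication, so any $\sl_{n+1}(\C)$-submodule $M$ is stable under multiplication by each $\partial_i$; that is, $M$ is an ideal of $\C[\partial_1,\dots,\partial_n]$. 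As $\Omega$ is generated by $1$ over $\C[\h]$, it suffices to show that every nonzero submodule $M$ contains a nonzero constant, for then $1\in M$ and $M=\Omega$. So I would take a nonzero $M$, choose $0\ne f\in M$ of minimal total degree $d$ in the $\partial_i$, and aim to force $d=0$.

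From (1.1) the generators act as shift‑and‑multiply operators: in the variables $\partial_i$ one has $e_{n+1,i}\circ f=\lambda_i^{-1}(\partial_i+a)\,f|_{\partial_i\mapsto\partial_i+1}$, $e_{i,n+1}\circ f=\lambda_i\big(a-\sum_k\partial_k\big)\,f|_{\partial_i\mapsto\partial_i-1}$, and $e_{ij}\circ f=\tfrac{\lambda_i}{\lambda_j}(\partial_j+a)\,f|_{\partial_i\mapsto\partial_i-1,\ \partial_j\mapsto\partial_j+1}$. With respect to the filtration by total degree, every element of $\sl_{n+1}(\C)$ raises degree by at most one, and its leading symbol is multiplication by a linear form. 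Subtracting the matching Cartan element removes this symbol: $e_{ij}-\tfrac{\lambda_i}{\lambda_j}(\partial_j+a)$ and $\lambda_i e_{n+1,i}-(\partial_i+a)$ preserve the degree filtration, and on the degree-$d$ homogeneous component $\C[\partial]_d\cong S^d(\C^n)$ they act as the standard generators of the $\gl_n$-action. Since $S^d(\C^n)$ is $\gl_n$-irreducible, the space of leading terms of the degree-$d$ elements of $M$ is a nonzero $\gl_n$-submodule of $\C[\partial]_d$, hence all of it; thus $M$ contains an element with any prescribed degree-$d$ leading term.

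The reduction of $d$ would be driven by two operators. Let $\Theta:=\sum_i\lambda_i e_{n+1,i}-\sum_i(\partial_i+a)\in U(\sl_{n+1}(\C))$, so that $\Theta\circ f=\sum_i(\partial_i+a)\big(f|_{\partial_i\mapsto\partial_i+1}-f\big)$; this preserves the filtration and acts on leading terms as the Euler operator, i.e. by the scalar $d$. Hence for a minimal-degree $f$ the element $(\Theta-d)\circ f\in M$ has degree $<d$, so by minimality $\Theta\circ f=df$, and combined with the previous paragraph $M$ contains a $\Theta$-eigenvector of eigenvalue $d$ for every prescribed leading term. I would then apply the degree-preserving operator $\Xi$ obtained from $\sum_i\lambda_i^{-1}e_{i,n+1}$ by deleting its leading multiplication part, which acts by $\Xi\circ f=\big(a-\sum_k\partial_k\big)\sum_i\big(f|_{\partial_i\mapsto\partial_i-1}-f\big)$, and combine its values with multiplications by the $\partial_i$ to isolate the degree-$(d-1)$ contribution. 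The crux is the computation showing this contribution to be a nonzero scalar multiple of a lower-degree polynomial, the scalar being proportional to $(n+1)a+d-1$. For $d=1$ this is transparent: the eigenvectors are the $\partial_j+a$, and since $\sum_j(\partial_j+a)=\sum_k\partial_k+na\in M$ while $\Xi\circ(\partial_1+a)=\sum_k\partial_k-a\in M$, their difference $(n+1)a$ is a nonzero constant in $M$ (using $a\ne0$).

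Because $a\in\C^*$ and $a\notin-\tfrac{1}{n+1}\Z_+$, the scalar $(n+1)a+d-1$ is nonzero for every $d\ge1$ (it vanishes exactly when $a=-(d-1)/(n+1)\in-\tfrac{1}{n+1}\Z_+$). So for any minimal-degree $f$ with $d\ge1$ the procedure would yield a nonzero element of $M$ of degree $<d$, contradicting minimality; hence $d=0$, $M$ contains a nonzero constant, and $M=\Omega$. The main obstacle is precisely the general-$d$ form of the crux computation: organizing the combination of the $\Theta$-eigenvectors, $\Xi$, and the multiplications so as to produce the degree-$(d-1)$ element and to verify that its coefficient is a nonzero multiple of $(n+1)a+d-1$. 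The first-order operators of the second paragraph cannot lower the degree on their own, and it is only this second-order combination that both lowers the degree and exposes the arithmetic condition on $a$ that separates irreducibility from reducibility.
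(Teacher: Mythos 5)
Your framework is sound and runs parallel to the paper's actual proof: both arguments pick a nonzero element $f$ of minimal total degree $d$ in a submodule, normalize its leading behaviour, and then hit it with a degree-lowering combination of root vectors whose output coefficient is (proportional to) $d(d-1+(n+1)a)$, which is where the hypothesis $a\notin-\frac{1}{n+1}\Z_+$ enters. Your normalization step is genuinely different and correct: the paper eliminates variables explicitly, applying operators of the form $\lambda_1(\lambda_n^{-1}e_{n,1}-e_{n+1,1})$ repeatedly to drive the minimal-degree element into $W\cap\C[\partial_1]$, whereas you observe that the filtration-preserving operators $e_{ij}-\frac{\lambda_i}{\lambda_j}(\partial_j+a)$ and $\lambda_ie_{n+1,i}-(\partial_i+a)$ induce the standard $\gl_n$-action on leading terms in $S^d(\C^n)$, so irreducibility of $S^d(\C^n)$ lets you prescribe the leading term; that is a clean alternative and your formulas for the shift-and-multiply actions, the ideal property of submodules, and the $\Theta$-eigenvector observation all check out against (3.1).

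However, there is a genuine gap, and you name it yourself: the general-$d$ degree-lowering computation is asserted, not proved. The sentence ``combine its values with multiplications by the $\partial_i$ to isolate the degree-$(d-1)$ contribution'' is precisely the nontrivial identity on which the whole theorem rests, and you verify it only for $d=1$. In the paper this step is a single explicit calculation, display (3.2): applying
\begin{equation*}
-\lambda_1^{-1}e_{1,n+1}+\lambda_1e_{n+1,1}-(e_{1,1}-e_{n+1,n+1})-\sum_{j=2}^n\lambda_j\lambda_1^{-1}e_{1,j}+\sum_{j=2}^n\lambda_je_{n+1,j}
\end{equation*}
to a monic $f\in\C[\partial_1]$ of degree $p$ yields $p(p-1+(n+1)a)\partial_1^{p-1}$ plus lower-degree terms. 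Neither your $\Theta$ (whose symbol is the Euler operator, hence scalar on leading terms) nor your $\Xi$ (whose symbol is $(\sum_k\partial_k)\sum_iD_i$, $D_i=\partial/\partial\partial_i$) lowers degree on its own, and finding the combination whose first-order symbol vanishes identically while its second-order part survives with the right scalar is exactly the content you left open. The gap is fillable within your setup: once your $\gl_n$ argument supplies $f\in M$ with leading term $\partial_1^d$, one can check that the displayed operator has vanishing degree-preserving symbol, so the degree-$(d-1)$ component of its output depends only on the leading term of $f$ and equals $d(d-1+(n+1)a)\partial_1^{d-1}\ne0$, giving the contradiction without even reducing $f$ to $\C[\partial_1]$. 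But as submitted, the proof is incomplete at its crux: predicting the scalar $(n+1)a+d-1$ and checking $d=1$ does not substitute for exhibiting and verifying the operator identity for all $d\ge1$.
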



 For convenience, we will identify $\Omega_1(\lambda_1)\otimes\cdots\otimes \Omega_n(\lambda_n)$ with the polynomial algebra $\C[\partial_1,\partial_2,\cdots,\partial_n]$ over $\C$ in  the commuting  indeterminants $\partial_1,\partial_2,\cdots,\partial_n$.
 To prove the theorem, we need the following

\begin{lemma}Let $a\in \C, \lambda_i\in \C^*, 1\le i\le n$. Then the $\mathfrak{sl}_{n+1}(\C)$-module $\Omega_{1-a}(\lambda_1, \lambda_2,...\lambda_n)$ is cyclic  with generator 1.
\end{lemma}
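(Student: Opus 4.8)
The plan is to exhibit the constant polynomial $1\in\C[\partial_1,\ldots,\partial_n]$ as a cyclic generator by showing that the Cartan subalgebra alone already generates the whole module. The decisive observation is that, under the embedding (1.1), the Cartan subalgebra $\mathfrak h=\span\{\partial_1,\ldots,\partial_n\}$ of $\mathcal W_n$ is literally the Cartan subalgebra of $\mathfrak{sl}_{n+1}(\C)$: the elements $e_{ii}-e_{i+1,i+1}$ ($1\le i\le n$) lie in, and in fact span, $\mathfrak h$. Consequently every operator $\partial_i$ belongs to $\mathfrak{sl}_{n+1}(\C)$, hence to $U(\mathfrak{sl}_{n+1}(\C))$, and it suffices to understand how these Cartan operators act.

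First I would compute the action of $\partial_i=D(e_i,0)$ on a monomial using the defining formula (3.1). Setting $u=e_i$ and $j=0$, the factor $\lambda^j$ becomes $1$, the term $(b-1)(u|j)$ vanishes, and each $(\partial_l-j_l)^{k_l}$ reduces to $\partial_l^{k_l}$, so that $D(e_i,0)\circ\prod_l\partial_l^{k_l}=\partial_i\prod_l\partial_l^{k_l}$. Thus each $\partial_i$ acts on the module exactly as multiplication by the indeterminate $\partial_i$ in the polynomial algebra $\C[\partial_1,\ldots,\partial_n]$. Since $\mathfrak h$ is abelian, $U(\mathfrak h)$ is the commutative polynomial algebra generated by $\partial_1,\ldots,\partial_n$, and applying it to $1$ produces every monomial $\prod_i\partial_i^{k_i}$, hence all of $\C[\partial_1,\ldots,\partial_n]$.

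Putting these together gives $U(\mathfrak{sl}_{n+1}(\C))\circ 1\supseteq U(\mathfrak h)\circ 1=\C[\partial_1,\ldots,\partial_n]=\Omega_{1-a}(\lambda_1,\ldots,\lambda_n)$, which is the whole module, so $1$ is a cyclic generator. There is essentially no serious obstacle here; the one point that must be verified carefully is that the \emph{full} $\mathfrak h$ (not merely the trace-zero hyperplane) is recovered inside $\mathfrak{sl}_{n+1}(\C)$. This holds because the first $n-1$ generators give $e_{ii}-e_{i+1,i+1}=\partial_i-\partial_{i+1}$, which span only $\{\sum x_i=0\}$, while the last generator $e_{nn}-e_{n+1,n+1}=\partial_1+\cdots+\partial_{n-1}+2\partial_n$ has coordinate sum $n+1\ne 0$ and so escapes that hyperplane, making the $n$ Cartan generators linearly independent and their span equal to all of $\mathfrak h$.
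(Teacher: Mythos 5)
Your proposal is correct and is essentially the paper's own argument: the paper's one-line proof invokes exactly the fact that $\Omega_{1-a}(\lambda_1,\ldots,\lambda_n)=U(\mathfrak h)1=U(\mathfrak h)\circ 1$, which rests on the two points you verify explicitly, namely that each $\partial_i$ acts (by formula (3.1) with $j=0$) as multiplication by $\partial_i$ and that $\mathfrak h$ lies inside $\mathfrak{sl}_{n+1}(\C)$ under the embedding (1.1). Your careful check that the Cartan generators $\partial_i-\partial_{i+1}$ together with $\partial_1+\cdots+\partial_{n-1}+2\partial_n$ span all of $\mathfrak h$ is a worthwhile detail the paper leaves implicit, but it does not change the route.
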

\begin{proof} The lemma follows from the fact that $\Omega_{1-a}(\lambda_1, \lambda_2,...\lambda_n)=U(\h)1=U(\h)\circ 1$.
\end{proof}

Now we can prove the theorem.
\begin{proof} Take a nonzero $\mathfrak{sl}_{n+1}(\C)$-submodule $W$ of $\Omega_{1-a}(\lambda_1, \lambda_2,...\lambda_n)$. By Lemma 13, it is sufficient to show that $1\in W.$
Let $f\in W$ be a nonzero element with minimal degree $p$ in $\partial_1,\cdots,\partial_{n}$.

\

{\bf Claim.} The degree $\text{deg}(f)$ of $f$ is 0.

 To the contrary, assume $p=\text{deg}(f)>0$. Then some $\partial_i,$ say $\partial_n$, has positive degree $k$. Write $f$ in the form
$$f=\sum_{j=0}^kf_j(\partial_1,\cdots,\partial_{n-1})\partial_n^j,$$
where $f_j(\partial_1,\cdots,\partial_{n-1})\in \C[\partial_1,\cdots,\partial_{n-1}]$ and $f_k(\partial_1,\cdots,\partial_{n-1})\neq 0.$ From
$$\begin{aligned}&\lambda_1(\lambda_n^{-1}e_{n,1}-e_{n+1,1})\circ (f)=\lambda_1(\lambda_n^{-1}t_nt_1^{-1}\partial_1-t_1^{-1}\partial_1)\circ (f)\\
&=\sum_{j=0}^{k}(\partial_1+a)f_j(\partial_1+1,\cdots,\partial_{n-1})\otimes ((\partial_n-1)^{j}-\partial_n^{j})\\
&=\sum_{j=1}^{k}(\partial_1+a)f_j(\partial_1+1,\cdots,\partial_{n-1})
\otimes(\sum_{i=0}^{j-1}(-1)^{j-i}\begin{pmatrix}j\\ j-i \end{pmatrix}\partial_n^{i}) \in W,
\end{aligned}$$
we have a nonzero vector  which has the same degree as $f$, and has lower degree in $\partial_n$ than $f$. Repeating the procedure a finite times, we can obtain a nonzero element of $W$ with same degree as $f$ and the degree of $\partial_n$ to be $0$. If the resulting element, still denoted by $f$,  does not belong to $\C[\partial_1],$ say $\partial_r$ appears but  all other $\partial_i, r<i\le n$ does not appear, using $\lambda_1(\lambda_r^{-1}e_{r,1}-e_{n+1,1})$ in the above computations several times, we can obtain an element has the same degree as the previous $f$ and   $\partial_r$ does not appear in $f$. After a finite number of steps, we can find a nonzero element of $W\cap \C[\partial_1]$ with degree $p, $ which is also  denoted by $f$. 

Assume $f=\sum_{i=0}^p \gamma_{i}\partial_1^{i} \in \C[\partial_1]$ where $ \gamma_{i}\in \C$ and $\gamma_{p}=1$.
By some computations, we have
\begin{equation}\begin{split} &\Big(-\lambda_1^{-1}e_{1, n+1}+\lambda_1e_{  n+1,1} -(e_{1,1}-e_{n+1, n+1}) \\
&\ \ \ \ -\sum_{j=2}^n\lambda_{j}\lambda_1^{-1}e_{1, j}
+\sum_{j=2}^n\lambda_{j}e_{  n+1,j}\Big)\circ f\\ &= \Big(\lambda_1^{-1}t_1\sum_{i=1}^n\partial_i+\lambda_1t_1^{-1}\partial_1 -(\partial_1+\sum_{i=1}^n\partial_i)\\
&\ \ \ \ -\sum_{j=2}^n\lambda_{j}\lambda_1^{-1}t_1t_{j}^{-1}\partial_{j}
+\sum_{j=2}^n\lambda_{j}t_{j}^{-1}\partial_{j}\Big)\circ f\end{split}\end{equation}
$$
=p(p-1 +(n+1)a)\partial_{1}^{p-1}+f_0=g\in W,$$
where $f_0\in \C[\partial_1]$ has degree less than $p-1$. Then $g\ne 0$ because of $p(p-1 +(n+1)a)\ne 0$, and $\text{deg}(g)=p-1<p$, which yields a contradiction. Thus we must have $\text{deg}(f)=0$. The claim follows.

So $1\in W$, i.e., $\Omega_{1-a}(\lambda_1, \lambda_2,...\lambda_n)$ is  an irreducible  module over $\mathfrak{sl}_{n+1}(\C)$.
\end{proof}

Next we will study  the $\mathfrak{sl}_{n+1}(\C)$-modules $\Omega_{1-a}(\lambda_1, \lambda_2,...\lambda_n)$ with  $a=-\frac{m}{n+1}$ for some non-negative integer $m$. Denote by
$$\begin{aligned}Y_0^i=1,\ Y_j^i=(\partial_i+a)&(\partial_i+a+1)\cdots(\partial_i+a+j-1),\\
& 1\le i\le n,\ j\in \N,\end{aligned}
$$
$$\begin{aligned}Y(j_1,j_2,\cdots,j_n)& =Y_{j_1}^{1}Y_{j_2}^{2}\cdots Y_{j_n}^{n}, \,\,\,\,  j_1,j_2,\cdots,j_n\in \Z_+,\\ & \text{with} \ j_1+j_2+\cdots+j_n=m+1,\end{aligned}$$
$$W_{1-a}(\lambda_1, \lambda_2,...\lambda_n)=\hskip -.5cm \sum_{\begin{matrix} j_1, j_2, \cdots, j_n\in\Z_+,\\j_1+j_2+\cdots+j_n=m+1\end{matrix}}\hskip -.5cm \C[\partial_1, \partial_2,...\partial_n]Y(j_1,j_2,\cdots,j_n).$$
For convenience, we will sometimes denote $W_{1-a}(\lambda_1, \lambda_2,...\lambda_n)$ by $W_{1-a}.$ Firstly, we have the following

\begin{lemma}  The subspace $W_{1-a}(\lambda_1, \lambda_2,...\lambda_n)$ is an $\sl_{n+1}(\C)$-submodule of $\Omega_{1-a}(\lambda_1, \lambda_2,...\lambda_n)$.
\end{lemma}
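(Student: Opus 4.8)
The plan is to verify directly that the action of $\mathfrak{sl}_{n+1}(\C)$ sends the spanning set of $W_{1-a}$ back into $W_{1-a}$. Since $W_{1-a}$ is by construction a $\C[\partial_1,\dots,\partial_n]$-submodule of $\Omega_{1-a}$, and $\mathfrak{sl}_{n+1}(\C)$ is spanned by its Cartan subalgebra together with the root vectors $e_{ij}$ from $(1.1)$, it suffices to show that each such generator maps $W_{1-a}$ into itself. The starting observation is that $(3.1)$ can be rewritten uniformly: for $D(u,r)$ one has
\[
D(u,r)\circ f=\lambda^{r}\Big(\sum_{l}u_l\partial_l-a(u|r)\Big)\,\tau_r(f),
\]
where $\tau_r$ is the algebra automorphism of $\C[\partial_1,\dots,\partial_n]$ determined by $\partial_i\mapsto\partial_i-r_i$. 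Thus every generator $X$ has the shape $X\circ f=c\,p(\partial)\,\tau(f)$ with $c\in\C$, $\deg p\le 1$ and $\tau$ a shift, so $X\circ(g w)=\tau(g)\,(X\circ w)$ for $g\in\C[\partial_1,\dots,\partial_n]$. Because $\tau(g)$ is again a polynomial, it is enough to check $X\circ Y(\mathbf{j})\in W_{1-a}$ for each generating multi-index $\mathbf{j}=(j_1,\dots,j_n)$ with $|\mathbf{j}|:=j_1+\cdots+j_n=m+1$.

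Next I would dispose of the easy generators. The Cartan elements $e_{ii}-e_{i+1,i+1}=\partial_i-\partial_{i+1}$ and $e_{n+1,n+1}=-\sum_l\partial_l$ act by multiplication, hence preserve the $\C[\partial]$-module $W_{1-a}$. For the remaining computations I will use the elementary shift identities
\[
(\partial_i+a)\,\tau_{-e_i}(Y_j^i)=Y_{j+1}^i,\qquad
\tau_{e_i}(Y_j^i)=(\partial_i+a-1)\,Y_{j-1}^i\ \ (j\ge1),
\]
together with $Y_{j+1}^i=(\partial_i+a+j)\,Y_j^i$. A direct substitution then gives $e_{n+1,i}\circ Y(\mathbf{j})=\lambda_i^{-1}Y(\mathbf{j}+e_i)$ and, for $1\le p\ne q\le n$, $e_{pq}\circ Y(\mathbf{j})=\lambda_p\lambda_q^{-1}(\partial_p+a-1)\,Y(\mathbf{j}-e_p+e_q)$ when $j_p\ge1$ (with the obvious modification when $j_p=0$). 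In every such case the resulting $Y$ either already has index-sum $m+1$, or has index-sum $m+2$ and equals $(\partial_l+a+j_l)$ (for a suitable $l$) times a $Y$ of index-sum $m+1$; either way it lies in $W_{1-a}$.

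The hard part will be the lowering generators $e_{i,n+1}=-D(\mathbf{1},e_i)$, where $\mathbf{1}=(1,\dots,1)$: these are the only generators that can decrease the index-sum, and this is where the precise value $a=-\tfrac{m}{n+1}$ must be used. Here one finds $e_{i,n+1}\circ Y(\mathbf{j})=-\lambda_i\big(\sum_l\partial_l-a\big)(\partial_i+a-1)\,Y(\mathbf{j}-e_i)$ for $j_i\ge1$, and now $Y(\mathbf{j}-e_i)$ has index-sum only $m$, so a priori it need not lie in $W_{1-a}$. The resolution is the identity
\[
\sum_{l}\partial_l-a=\sum_{l}(\partial_l+a+j'_l),
\]
valid for every $\mathbf{j}'$ with $|\mathbf{j}'|=m$ precisely because $(n+1)a+m=0$; indeed the right-hand side equals $\sum_l\partial_l+na+|\mathbf{j}'|=\sum_l\partial_l+na+m$, which coincides with $\sum_l\partial_l-a$ if and only if $(n+1)a=-m$. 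Applying this with $\mathbf{j}'=\mathbf{j}-e_i$ yields $\big(\sum_l\partial_l-a\big)Y(\mathbf{j}-e_i)=\sum_l Y(\mathbf{j}-e_i+e_l)$, a sum of $Y$'s of index-sum $m+1$, hence an element of $W_{1-a}$; multiplying by the polynomial $-\lambda_i(\partial_i+a-1)$ keeps us inside the $\C[\partial]$-module $W_{1-a}$. The case $j_i=0$ is easier, since then $\tau_{e_i}$ fixes the $i$-th factor and $e_{i,n+1}\circ Y(\mathbf{j})=-\lambda_i\big(\sum_l\partial_l-a\big)Y(\mathbf{j})$ with $|\mathbf{j}|=m+1$ already. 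Having checked a spanning set of $\mathfrak{sl}_{n+1}(\C)$, I conclude that $W_{1-a}$ is a submodule. The only genuinely delicate point — and the sole place the hypothesis $a=-m/(n+1)$ enters — is the displayed identity above.
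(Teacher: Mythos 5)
Your proof is correct and follows essentially the same route as the paper's: a direct verification that the Cartan elements and root vectors of $\mathfrak{sl}_{n+1}(\C)$ preserve $W_{1-a}$, with the hypothesis $a=-\frac{m}{n+1}$ entering at exactly the same point as in the paper, namely to rewrite $\sum_l\partial_l-a=\sum_l(\partial_l+a+j'_l)$ for $|\mathbf{j}'|=m$ in the lowering case (the paper's computation of $L_{n,n+1}$ with $j_n>0$). Your twisted-Leibniz observation $X\circ(gw)=\tau(g)(X\circ w)$, which lets you test only the module generators $Y(\mathbf{j})$ rather than the full basis $\partial^{k}Y(j_1,\dots,j_n)$ carried along in the paper's case analysis, is a clean streamlining of the same argument rather than a different one.
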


\begin{proof}
Since $\sl_{n+1}(\C)$ is generated by the elements
\begin{equation}\begin{split}&e_{i,i+1}=t_it_{i+1}^{-1}\partial_{i+1}, e_{i+1,i}=t_{i+1}t_i^{-1}\partial_i, 1\le i\le n-1;\\
&e_{n,n+1}=-t_n\sum_{j=1}^n\partial_j, \ e_{n+1,n}=t_n^{-1}\partial_n,\end{split}\end{equation}
and  $W_{1-a}$ has a basis
\begin{equation}\partial^k
Y(j_1,j_2,\cdots,j_n), \,\,  k \in \Z_{+}^n,\end{equation}
for all $j_1,j_2,\cdots,j_n\in\Z_+$ with $j_1+j_2+\cdots+j_n=m+1$, we just need to show that the elements  in (3.3) map the vectors in (3.4) into $W_{1-a}$ itself.

Take a vector $\partial^k Y(j_1,j_2,\cdots,j_n)$ in (3.3). Let $1\le r\ne l\le n+1$ and
$$L_{r,l}=e_{r,l}\circ
\partial^k Y(j_1,j_2,\cdots,j_n).$$ If  $r,l\le n$ and  $j_r=j_l=0$, then
$$\begin{aligned}L_{r,l}&=\lambda_r\lambda_l^{-1}\left(\partial_1^{k_1}\cdots(\partial_r-1)^{k_r}\cdots(\partial_l+a)(\partial_l+1)^{k_l}
\cdots\partial_n^{k_n}\right)\\
&\ \ \ \ \times Y(j_1,j_2,\cdots,j_n)\in W_{1-a}.\end{aligned}$$
If  $r<l\le n,$ and $j_r>0, j_l=0$, then
$$\begin{aligned}
L_{r,l}&=\lambda_r\lambda_l^{-1}\left(\partial_1^{k_1}\cdots(\partial_r-1)^{k_r}\cdots(\partial_l+a)(\partial_l+1)^{k_l}
\cdots\partial_n^{k_n}\right)\\
 &\ \ \ \ \ \times \left(Y_{j_1}^{1}\cdots Y_{j_{r}-1}^{r}(\partial_{r}+a-1)\cdots Y_{j_n}^{n}\right)\\
&= \left(\partial_1^{k_1}\cdots(\partial_{r}+a-1)(\partial_r-1)^{k_r}\cdots(\partial_l+1)^{k_l}\cdots\partial_n^{k_n}\right)\\
&\ \ \ \ \ \times\left(Y_1^lY_{j_1}^{1}\cdots Y_{j_r-1}^{r}\cdots Y_{n}^{n}\right)\in W_{1-a}.\end{aligned}$$
Similarly, if  $r<l\le n,$ and $j_r=0, j_l>0$,   we can also have $L_{r,l}\in W_{1-a}$.
 If  $r<l\le n,$ and $j_r>0, j_l>0$,  then
$$\begin{aligned}
L_{r,l}&=\lambda_r\lambda_l^{-1}\left(\partial_1^{k_1}\cdots(\partial_r-1)^{k_r}\cdots(\partial_l+a)(\partial_l+1)^{k_l}
\cdots\partial_n^{k_n}\right)  \\
&\times \left(Y_{j_1}^{1}\cdots Y_{j_{r}-1}^{r}(\partial_{r}+a-1)\cdots (\partial_{l}+a+1)\cdots(\partial_{l}+a+j_l)\cdots Y_{n}^{n}\right)\\
&=\left(\partial_1^{k_1}\cdots(\partial_{r}+a-1)(\partial_r-1)^{k_r}\cdots(\partial_l+1)^{k_l}
\cdots \partial_n^{k_n}\right)\\
& \times
\left(Y_{j_1}^{1}\cdots Y_{j_r-1}^{r}\cdots Y_{j_l+1}^{l}\cdots Y_{j_n}^{n}\right)\in W_{1-a}
.\end{aligned}$$
If $j_n=0$, then
$$L_{n+1,n}=
\lambda_n^{-1}\left(\partial_1^{k_1}\cdots\partial_{n-1}^{k_{n-1}}
(\partial_n+a)(\partial_n+1)^{k_n}\right)Y(j_1,j_2,\cdots,j_n)\in W_{1-a},$$
$$
L_{n,n+1}=-\lambda_n\Big((\sum_{j=1}^n\partial_j-a)\partial_1^{k_1}\partial_2^{k_2}\cdots(\partial_n-1)^{k_n}\Big)
Y(j_1,j_2,\cdots,j_n)\in W_{1-a}.
$$
If $j_n>0$, using  $a=-\frac{m}{n+1}$ and $ j_1+j_2+\cdots+j_n=m+1$, we obtain that
$$\begin{aligned}L_{n+1,n}&=\lambda_n^{-1}\left(\partial_1^{k_1}\cdots\partial_{n-1}^{k_{n-1}}
(\partial_n+a)(\partial_n+1)^{k_n}\right)\\
&\ \ \ \ \ \times \left(Y_{j_1}^{1}\cdots Y_{j_{n-1}}^{n-1} (\partial_{n}+a+1)\cdots(\partial_{n}+a+j_n)\right)\\
&=\lambda_n^{-1}\left(\partial_1^{k_1}\cdots\partial_{n-1}^{k_{n-1}}
(\partial_n+a+j_n)(\partial_n+1)^{k_n}\right)\\
&\ \ \ \ \ \times Y(j_1,j_2,\cdots,j_n)\in W_{1-a},
\end{aligned}$$
$$\begin{aligned}
&L_{n,n+1}=-\lambda_n\Big(\sum_{l=1}^n\partial_l-a\Big)\\
&\ \ \times \left(\partial_1^{k_1}\cdots\partial_{n-1}^{k_{n-1}}(\partial_n+a-1)(\partial_n-1)^{k_n}\right)\left(Y_{j_1}^{ 1}\cdots Y_{j_{n-1}}^{{n-1}}Y_{j_{n}-1}^{n}\right)\\
&=-\lambda_n \big(\sum_{l=1}^{n-1}(\partial_l+a+j_l)+(\partial_n+a+j_n-1)\big)\big(Y_{j_1}^{ 1}\cdots Y_{j_{n-1}}^{{n-1}}Y_{j_{n}-1}^{n}\big) X\\
&=-\lambda_nX\sum_{l=1}^{n-1}Y(j_1,...,{j_{l}+1},..., j_n-1) -\lambda_nXY(j_1,...,  j_n)  \in W_{1-a},
\end{aligned}$$ where $X=
\partial_1^{k_1}\cdots\partial_{n-1}^{k_{n-1}}(\partial_n+a-1)(\partial_n-1)^{k_n}.$
Thus  $W_{1-a}$ is a submodule of $\Omega_{1-a}(\lambda_1, \lambda_2,...\lambda_n)$.
\end{proof}

\begin{lemma} The $\sl_{n+1}(\C)$-module $W_{1-a}(\lambda_1, \lambda_2,...\lambda_n)$ can be generated by $Y(j_1,j_2,\cdots,j_n) $ for any $j_1,j_2,\cdots,j_n\in \Z_+, \text{with} \ j_1+j_2+\cdots+j_n=m+1.$\end{lemma}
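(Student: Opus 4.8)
The plan is to exploit the two structural facts visible in (3.1). First, the Cartan subalgebra $\h=\span\{\partial_1,\dots,\partial_n\}$ lies inside $\sl_{n+1}(\C)$, and since $\partial_i=D(e_i,0)$ acts on $\Omega_{1-a}$ by multiplication by $\partial_i$, the enveloping algebra $U(\h)=\C[\partial_1,\dots,\partial_n]$ acts simply by multiplication. Fix a single tuple with $j_1+\cdots+j_n=m+1$, write $Y=Y(j_1,\dots,j_n)$, and let $M$ be the $\sl_{n+1}(\C)$-submodule it generates. Because $M$ is $U(\h)$-stable, it is a $\C[\partial_1,\dots,\partial_n]$-submodule, and $M\subseteq W_{1-a}$ since $W_{1-a}$ is a submodule (previous lemma). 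As $W_{1-a}=\sum_{i_1+\cdots+i_n=m+1}\C[\partial_1,\dots,\partial_n]\,Y(i_1,\dots,i_n)$, it suffices to show every generator $Y(i_1,\dots,i_n)$ with the same total $m+1$ lies in $M$; then $\C[\partial_1,\dots,\partial_n]\,Y(i_1,\dots,i_n)\subseteq M$ and hence $W_{1-a}\subseteq M$.

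The heart of the argument is one ``box-moving'' identity. Writing the action of $e_{rl}$ ($1\le r\ne l\le n$) from (3.1) in operator form, $e_{rl}\circ f=\tfrac{\lambda_r}{\lambda_l}(\partial_l+a)\,f(\dots,\partial_r-1,\dots,\partial_l+1,\dots)$, and using $Y^{r}_{j_r}(\partial_r-1)=(\partial_r+a-1)Y^{r}_{j_r-1}$ together with $(\partial_l+a)\,Y^{l}_{j_l}(\partial_l+1)=Y^{l}_{j_l+1}$, the direct evaluation gives, for $j_r\ge 1$,
\[
e_{rl}\circ Y=\frac{\lambda_r}{\lambda_l}(\partial_r+a-1)\,Y',\qquad Y':=Y(\dots,j_r-1,\dots,j_l+1,\dots).
\]
The decisive observation is that this awkward polynomial multiple can be split: using $\partial_r+a-1=(\partial_r+a+j_r-1)-j_r$ with $(\partial_r+a+j_r-1)Y^{r}_{j_r-1}=Y^{r}_{j_r}$ and $Y^{l}_{j_l+1}=(\partial_l+a+j_l)Y^{l}_{j_l}$, one obtains the equivalent form
\[
e_{rl}\circ Y=\frac{\lambda_r}{\lambda_l}\Big((\partial_l+a+j_l)\,Y-j_r\,Y'\Big).
\]

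With this identity in hand the inductive step is immediate: since $(\partial_l+a+j_l)\in U(\h)$ acts by multiplication and $j_r\ne 0$, one solves
\[
j_r\,Y'=(\partial_l+a+j_l)\circ Y-\frac{\lambda_l}{\lambda_r}\,e_{rl}\circ Y,
\]
so $Y\in M$ forces $Y'\in M$. Thus $M$ is closed under moving one box from any position $r$ to any position $l$ in $\{1,\dots,n\}$. Since any composition of $m+1$ into $n$ nonnegative parts is reachable from the fixed starting tuple by finitely many such moves (the $e_{n,n+1},e_{n+1,n}$ generators are not even needed, as redistributing boxes among positions $1,\dots,n$ already connects all relevant tuples), an induction on the number of moves yields $Y(i_1,\dots,i_n)\in M$ for every tuple with $i_1+\cdots+i_n=m+1$, whence $M=W_{1-a}$.

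I expect the only real obstacle to be exactly the rewriting in the second paragraph: the raw computation of $e_{rl}$ lands on the \emph{inconvenient} factor $(\partial_r+a-1)$ in front of the new generator $Y'$, and no $\C[\partial_1,\dots,\partial_n]$-operation can strip this factor off. Recognizing the elementary rising-factorial identity that trades this multiple for a multiplication operator applied to the \emph{old} generator $Y$ (thereby exposing the invertible scalar coefficient $j_r$ of $Y'$) is what makes the induction close; everything else is bookkeeping with the rising factorials $Y^i_j$ and the evident connectivity of the set of tuples under box moves.
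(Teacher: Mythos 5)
Your proof is correct and is essentially the paper's own argument: the paper moves one box from position $r$ to position $s$ by applying $\frac{1}{j_r+1}\bigl(\lambda_s e_{n+1,s}-\lambda_r^{-1}\lambda_s e_{r,s}\bigr)$ to the generator, and since $\lambda_s e_{n+1,s}$ acts on the generator $Y$ exactly as multiplication by $(\partial_s+a+j_s)$, the paper's cancellation is precisely your identity $j_r\,Y'=(\partial_l+a+j_l)\,Y-\frac{\lambda_l}{\lambda_r}\,e_{rl}\circ Y$ after rearrangement. The only cosmetic difference is that you invoke $\C[\h]$-stability of the submodule where the paper uses the root vector $e_{n+1,s}$ directly; the rising-factorial cancellation and the connectivity of compositions under single box moves are the same.
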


\begin{proof} The lemma follows from the following computations for all different $1\le r, s\le n$, and any $j_r, j_s\in\Z_+$:
$$\begin{aligned}\frac{1}{j_r+ 1}&(\lambda_se_{n+1, s}-\lambda_1^{-1}\lambda_se_{r,s})\circ(Y_{j_r+1}^rY_{j_s}^s)\\
=&\frac{1}{j_r+1}(\lambda_st_s^{-1}\partial_s-\lambda_r^{-1}\lambda_st_rt_s^{-1}\partial_s)\circ(Y_{j_r+1}^rY_{j_s}^s)\\
=&\frac{1}{j_r+ 1}(Y^r_{j_r+1 }Y^s_{{j_s}+1}-(\partial_r+a-1)Y_{j_r}^rY_{{j_s}+1}^s)\\
=&\frac{1}{j_r+ 1}((\partial_r+a+j_r)-(\partial_r+a-1))Y^r_{j_r}Y^s_{{j_s}+1}\\
=&Y_{j_r}^rY_{{j_s}+1}^s , 1< s\le n, 0\le j_r\le m.\end{aligned}$$

\end{proof}

Under the standard basis of $\sl_{n+1}({\C})$ in (1.1), we define the fundamental weights $\Lambda_i\in \h^*$ as follows:   $\Lambda_i(e_{jj}-e_{j+1,j+1})=\delta_{i,j}$ for all $ 1\le i, j\le n$.

\begin{theorem} Let $n\ge 2$, $m\in\Z_+$, $a=-\frac{m}{n+1}$, and $\lambda_i\in \C^*, 1\le i\le n.$ Then the $\sl_{n+1}({\C})$-module $\Omega_{1-a}(\lambda_1, \lambda_2,...\lambda_n)$ has a unique proper (irreducible) submodule $W_{1-a}(\lambda_1, \lambda_2,...\lambda_n)$, and the  quotient module $$\Omega_{1-a}(\lambda_1, \lambda_2,...\lambda_n)/W_{1-a}(\lambda_1, \lambda_2,...\lambda_n)$$ is an   irreducible module over $\sl_{n+1}(\C)$ of  dimension $\binom{m+n}m$. More precisely, the quotient module is isomorphic to the irreducible highest module  with highest weight $m\Lambda_n$.
\end{theorem}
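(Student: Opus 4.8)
The plan is to prove the three assertions---the dimension of the quotient, its identification with $L(m\Lambda_n)$, and the uniqueness and irreducibility of $W_{1-a}$---in that order, after one structural observation. Since $\partial_i=D(e_i,0)$ lies in $\h\subseteq\sl_{n+1}(\C)$ and acts on $\Omega_{1-a}=\C[\partial_1,\dots,\partial_n]$ by multiplication, \emph{every} $\sl_{n+1}(\C)$-submodule of $\Omega_{1-a}$ is automatically an ideal of the polynomial algebra $\C[\partial_1,\dots,\partial_n]$; in particular $W_{1-a}$ is the ideal generated by the $Y(j_1,\dots,j_n)$ with $j_1+\cdots+j_n=m+1$. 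I would work in the variables $x_i=\partial_i+a$, for which $Y_j^i=x_i(x_i+1)\cdots(x_i+j-1)$ is the rising factorial; the products $Y_{j_1}^1\cdots Y_{j_n}^n$ form a basis of $\C[\partial_1,\dots,\partial_n]$ that is unitriangular with respect to total degree.

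First I would compute the dimension. Using $x_i\,Y_j^i=Y_{j+1}^i-j\,Y_j^i$ one checks that the span $S$ of all $Y_{j_1}^1\cdots Y_{j_n}^n$ with $j_1+\cdots+j_n\ge m+1$ is an ideal; since $S$ contains the generators of $W_{1-a}$ and, conversely, every such product lies in $W_{1-a}$ (by induction on $j_1+\cdots+j_n$, peeling off one factor $x_i+j_i-1$ at a time), we get $W_{1-a}=S$. Hence the images of $\{Y_{j_1}^1\cdots Y_{j_n}^n:j_1+\cdots+j_n\le m\}$ form a basis of the quotient, so $\dim(\Omega_{1-a}/W_{1-a})=|\{j\in\Z_+^n:\sum_i j_i\le m\}|=\binom{m+n}{m}$.

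Next I would identify the quotient. Being finite dimensional it is an $\h$-weight module, and since $Y_{m+1}^i\in W_{1-a}$ the operator $(\partial_i+a)(\partial_i+a+1)\cdots(\partial_i+a+m)$ annihilates it, so every weight $\nu$ has $\nu(\partial_i)\in\{-a,-a-1,\dots,-a-m\}$. Writing $h_i=e_{ii}-e_{i+1,i+1}$, the weight $\mu_0$ with $\mu_0(\partial_i)=-a$ for all $i$ satisfies $\mu_0(h_i)=m\delta_{i,n}$, i.e. $\mu_0=m\Lambda_n$, and the computation $\mu_0-\nu=\sum_i(s_1+\cdots+s_i)\alpha_i$ (where $\nu(\partial_i)=-a-s_i$, $s_i\in\Z_+$, and the $\alpha_i$ are the simple roots) shows $\mu_0\ge\nu$ for every weight $\nu$. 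Solving the resulting linear recursion produces an explicit nonzero highest weight vector, namely the class of $v=\sum_{j_1+\cdots+j_n\le m}(j_1!\cdots j_n!)^{-1}Y_{j_1}^1\cdots Y_{j_n}^n$, which satisfies $(\partial_i+a)v\in W_{1-a}$ for all $i$; thus $m\Lambda_n$ really occurs and is the highest weight. Therefore $L(m\Lambda_n)$ is a subquotient of $\Omega_{1-a}/W_{1-a}$, and since $\dim L(m\Lambda_n)=\dim S^m\big((\C^{n+1})^*\big)=\binom{m+n}{m}=\dim(\Omega_{1-a}/W_{1-a})$, a subquotient of full dimension forces $\Omega_{1-a}/W_{1-a}\cong L(m\Lambda_n)$; in particular the quotient is irreducible and $W_{1-a}$ is a maximal submodule.

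The crux is the last assertion, for which I would show that \emph{every} nonzero submodule $N$ contains $W_{1-a}$. Take $0\ne f\in N$ of minimal total degree $p$ and run the reduction from the proof of Theorem 12: first bring $f$ into $N\cap\C[\partial_1]$ without raising the degree, then apply the operator in (3.2); its output has leading coefficient $p\big(p-1+(n+1)a\big)=p(p-1-m)$. The key point---and the main obstacle---is that, unlike in Theorem 12, this coefficient now \emph{vanishes} exactly at $p=0$ and $p=m+1$, so the degree-lowering step no longer yields a contradiction there. If $p=0$ then $1\in N$ and $N=\Omega_{1-a}$ by Lemma 13. Otherwise $p=m+1$; then the monic reduced element $\tilde f\in N\cap\C[\partial_1]$ is killed by the operator (3.2) (its image has degree $\le m$, lies in $N$, and $N$ has minimal degree $m+1$, so it vanishes), and since $Y_{m+1}^1$ lies in that kernel too (its image has degree $\le m$ and lies in $W_{1-a}$, hence is $0$) while the kernel in degrees $\le m+1$ is spanned by $1$ and $Y_{m+1}^1$, we get $\tilde f=Y_{m+1}^1+c$ for some constant $c$. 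Applying $e_{2,1}$ and using that $N\cap\C[\partial_1]$ is a principal ideal of $\C[\partial_1]$ generated by $\tilde f$ forces $c=0$; hence $Y_{m+1}^1\in N$, and by Lemma 15 $W_{1-a}\subseteq N$. Given this, the submodule lattice of $\Omega_{1-a}$ is exactly $0\subset W_{1-a}\subset\Omega_{1-a}$: a nonzero submodule contained in $W_{1-a}$ must contain, hence equal, $W_{1-a}$ (so $W_{1-a}$ is irreducible), while a submodule properly containing $W_{1-a}$ must be $\Omega_{1-a}$ because $\Omega_{1-a}/W_{1-a}$ is irreducible. This yields the uniqueness and irreducibility of $W_{1-a}$ and completes the proof.
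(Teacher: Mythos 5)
Your proof is correct, and although it keeps the paper's skeleton (minimal total degree, reduction into $N\cap\C[\partial_1]$, the operator of (3.2) with leading coefficient $p(p-1+(n+1)a)=p(p-1-m)$, Lemmas 13 and 15), two of your key steps take a genuinely different route from the paper's. For the quotient, the paper proves irreducibility directly by rerunning the Theorem 12 reduction on representatives of degree at most $m$, and then identifies the module via the \emph{lowest} weight vector $Y_m^1$ of weight $-m\Lambda_1$; you instead bound all weights by $m\Lambda_n$ (using $Y^i_{m+1}\in W_{1-a}$, which even gives semisimplicity of each $\partial_i$ without invoking Weyl's theorem), exhibit the explicit highest weight vector $v=\sum_{j_1+\cdots+j_n\le m}(j_1!\cdots j_n!)^{-1}Y^1_{j_1}\cdots Y^n_{j_n}$ --- the cancellation $(\partial_i+a)v\in W_{1-a}$ does check out against $x_iY^i_j=Y^i_{j+1}-jY^i_j$ --- and close with the dimension count $\dim L(m\Lambda_n)=\binom{m+n}{m}$, so irreducibility of the quotient comes for free rather than by a second reduction argument. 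For uniqueness, the paper argues via the dichotomy $W\cap W_{1-a}=0$ (excluded because $\Omega_{1-a}$ is free over $\C[\h]$ and so has no nonzero finite-dimensional submodules) or $W_{1-a}\subseteq W$; you prove the stronger statement that \emph{every} nonzero submodule contains $W_{1-a}$, using the dichotomy $p\in\{0,m+1\}$ and then pinning $\tilde f=Y^1_{m+1}$: your kernel claim is justified since (3.2) sends degree $p$ to degree exactly $p-1$ for $1\le p\le m$ (so the kernel in degrees $\le m+1$ is two-dimensional), and your $e_{2,1}$ step is sound, as $e_{2,1}\circ\tilde f=\lambda_1^{-1}\lambda_2(\partial_1+a)\tilde f(\partial_1+1)$ and, with $P(x)=x(x+1)\cdots(x+m)$, the divisibility $(P+c)\mid P(x)(x+m+1)+cx$ forces the monic linear quotient $x+d$ to satisfy $(m+1-d)P=cd$, hence $d=m+1$ and $c=0$; here your opening observation that every submodule is an ideal of $\C[\partial_1,\dots,\partial_n]$ (because $\h$ acts by multiplication) is exactly what makes $N\cap\C[\partial_1]$ a principal ideal. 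A bonus of your approach: the identification $W_{1-a}=\operatorname{span}\{Y^1_{j_1}\cdots Y^n_{j_n}: j_1+\cdots+j_n\ge m+1\}$ makes the paper's ``clearly'' in the dimension count rigorous, and it silently corrects Lemma 14, whose family (3.4) is only a spanning set rather than a basis (e.g.\ for $m=0$, so $a=0$, one has $\partial_2\,Y(1,0,\dots,0)=\partial_1\,Y(0,1,0,\dots,0)$).
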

\begin{proof} Clearly, the dimension of $\Omega_{1-a}(\lambda_1, \lambda_2,...\lambda_n)/W_{1-a}$ is the number of nonnegative solutions of the following equation $x_1+x_2+...+x_n\le m$, which is, by  induction on $m$,
$$\begin{pmatrix}n-1\\0\end{pmatrix}+\begin{pmatrix}n\\1\end{pmatrix}+\begin{pmatrix}n+1\\2\end{pmatrix}+\cdots+\begin{pmatrix}n+m-1\\m\end{pmatrix}=
\begin{pmatrix}n+m\\m\end{pmatrix}.$$

 Let us first prove that $\Omega_{1-a}(\lambda_1, \lambda_2,...\lambda_n)/W_{1-a}$ is irreducible. Take an arbitrary nonzero element $f$ in the quotient. We may assume that $f$ is a polynomial in $\partial _1, ..., \partial _n$ with degree less that $m+1$. Using the same arguments in the proof of Theorem 12, we obtain that $1\in \Omega_{1-a}(\lambda_1, \lambda_2,...\lambda_n)/W_{1-a}$. Thus
$\Omega_{1-a}(\lambda_1, \lambda_2,...\lambda_n)/W_{1-a}$ is an irreducible   module over $\sl_{n+1}(\C).$

By simple computations in $\Omega_{1-a}(\lambda_1,\lambda_2,\cdots,\lambda_n)/W_{1-a}$ we can obtain
$$\begin{aligned}(e_{11}&-e_{22})\circ Y_m^1=(\partial_1-\partial_2)Y_m^1\\
&=\big((\partial_1+a+m)-(\partial_2+a)-m\big)Y_m^1\\
&=Y_{m+1}^1-Y_m^1Y_1^2-mY_m^1=-mY_m^1,\\
(e_{ii}&-e_{i+1,i+1})\circ Y_m^1=(\partial_i-\partial_{i+1})Y_m^1\\
&=\big((\partial_i+a)-(\partial_{i+1}+a)\big)Y_m^1\\
&=Y_m^1Y_1^i-Y_m^1Y_1^{i+1}=0,\ 2\le i\le n-1,\\
(e_{nn}&-e_{n+1,n+1})\circ Y_m^1=(\partial_n+\sum_{j=1}^n\partial_j)Y_m^1\\
&=\big((\partial_1+a+m)+2(\partial_n+a)+\sum_{i=2}^{n-1}(\partial_i+a)\big)Y_m^1\\
&=Y_{m+1}^1+2Y_m^1Y_1^n+\sum_{i=2}^{n-1}Y_m^1Y_1^i=0,
\end{aligned}$$
and
$$\begin{aligned}&e_{21}\circ Y_m^1=t_2t_1^{-1}\partial_1\circ Y_m^1=\lambda_1^{-1}\lambda_2Y_{m+1}^1=0,\\
&e_{i+1,i}\circ Y_m^1=t_{i+1}t_i^{-1}\partial_i\circ Y_m^1=\lambda_i^{-1}\lambda_{i+1}Y_m^1Y_1^i=0,\ 2\le i\le n-1,\\
&e_{n+1,n}\circ Y_m^1=t_n^{-1}\partial_n\circ Y_m^1=\lambda_n^{-1}Y_m^1Y_1^n=0,
\end{aligned}$$ where we have identified the elements in $\Omega_{1-a}(\lambda_1,\lambda_2,\cdots,\lambda_n)$ with their images in  the quotient
$\Omega_{1-a}(\lambda_1,\lambda_2,\cdots,\lambda_n)/W_{1-a}$. So we can see that $Y_m^1$ is the lowest weight vector of the quotient module with weight $-m\Lambda_1,$ and hence the quotient module is isomorphic to the irreducible highest weight module  with highest weight $m\Lambda_n.$

Now let us prove that $W_{1-a}$ is irreducible. Let $f\in W_{1-a}$ be a nonzero element with minimal degree. Then by the same arguments used in the proof of Theorem 12 again and   using the fact that
$(n+1)a+(m+1)-1=0,$ we can assume that $f\in W_{1-a}\cap \C[\partial_1]$ with degree $m+1$. So $f=\gamma Y_{m+1}^1$ for some $\gamma\in \C^*.$ By Lemma 15 we know that $f$ generates $W_{1-a}$ as an  $\sl_{n+1}(\C)$-module, i.e., $W_{1-a}$ is irreducible.

Next we prove that $W_{1-a}$ is the only nontrivial $\sl_{n+1}(\C)$-submodule of $\Omega_{1-a}(\lambda_1, \lambda_2,...\lambda_n)$.
Let $W$ be a nonzero submodule of $\Omega_{1-a}(\lambda_1, \lambda_2,...\lambda_n)$. Then  $ W\cap W_{1-a}=0$ or $ W_{1-a}\subseteq W$.

If $ W_{1-a}\subseteq W$, using the irreducibility of the modules we deduce that $W= W_{1-a}$ or $W=\Omega_{1-a}(\lambda_1, \lambda_2,...\lambda_n)$.

If $ W\cap W_{1-a}=0$, then $\dim W< \infty$. We know that  $\Omega_{1-a}(\lambda_1, \lambda_2,...\lambda_n)$ is a free
$\h$-module which yields a contradiction to $\dim W< \infty$. So this case does not occur. This completes the proof.
\end{proof}

Combining Theorems 12 and 16, we have the following

\begin{corollary} Let $n\ge 2$, $a\in \C, \lambda_i\in \C^*, 1\le i\le n.$ Then the $\sl_{n+1}(\C)$-module $\Omega_{1-a}(\lambda_1, \lambda_2,...\lambda_n)$ is irreducible if and only if  $a\notin -\frac{1}{n+1}\Z_+.$
\end{corollary}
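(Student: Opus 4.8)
The statement is a direct consequence of the two preceding results, so the plan is to split the equivalence into its two directions and invoke Theorem 12 and Theorem 16 respectively, after reconciling the hypotheses on $a$. The preliminary point I would record is the elementary observation that $-\frac{1}{n+1}\Z_+=\{-\frac{m}{n+1}:m\in\Z_+\}$ contains $0$ (take $m=0$), so that the condition $a\notin -\frac{1}{n+1}\Z_+$ automatically forces $a\in\C^*$. This matters because Theorem 12 is stated under the hypothesis $a\in\C^*$, whereas the corollary only assumes $a\in\C$; the observation shows there is no loss. I would also note that the set $-\frac{1}{n+1}\Z_+$ and its complement in $\C$ partition $\C$, so the two cases treated below are exhaustive and mutually exclusive.

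For the sufficiency direction, suppose $a\notin -\frac{1}{n+1}\Z_+$. By the observation above $a\in\C^*$, and together with the standing assumption $\lambda_i\in\C^*$ this is exactly the hypothesis of Theorem 12; I would conclude at once that $\Omega_{1-a}(\lambda_1,\ldots,\lambda_n)$ is an irreducible $\sl_{n+1}(\C)$-module. For the converse I would argue by contraposition: assume $a\in -\frac{1}{n+1}\Z_+$, say $a=-\frac{m}{n+1}$ with $m\in\Z_+$. Then Theorem 16 applies and produces a unique proper nonzero submodule $W_{1-a}(\lambda_1,\ldots,\lambda_n)$, whose quotient is the finite-dimensional irreducible module of highest weight $m\Lambda_n$; in particular $\Omega_{1-a}(\lambda_1,\ldots,\lambda_n)$ is reducible. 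Contrapositively, irreducibility of $\Omega_{1-a}(\lambda_1,\ldots,\lambda_n)$ forces $a\notin -\frac{1}{n+1}\Z_+$, which completes the equivalence.

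Since all of the substantive work is already carried out in Theorems 12 and 16, there is no genuine obstacle in the corollary itself. The only point demanding any care is the bookkeeping of whether $0$ lies in $-\frac{1}{n+1}\Z_+$ together with the matching of the $\C^*$ hypothesis of Theorem 12 against the weaker $\C$ hypothesis of the corollary; the initial observation settles this, and verifying that the two cases genuinely partition $\C$ ensures the argument is complete rather than merely covering disjoint subcases.
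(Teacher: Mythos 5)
Your proof is correct and is essentially the paper's own argument: the paper derives the corollary by "combining Theorems 12 and 16" exactly as you do, with sufficiency from Theorem 12 and reducibility in the exceptional cases $a=-\frac{m}{n+1}$ from the existence of the proper nonzero submodule $W_{1-a}(\lambda_1,\ldots,\lambda_n)$ in Theorem 16. Your added bookkeeping — that $0\in-\frac{1}{n+1}\Z_+$, so $a\notin-\frac{1}{n+1}\Z_+$ automatically gives $a\in\C^*$ and reconciles the corollary's hypothesis $a\in\C$ with Theorem 12's hypothesis $a\in\C^*$ — is a point the paper leaves implicit, and you handle it correctly.
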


Now let us consider the problem of isomorphisms between two irreducible $\sl_{n+1}(\C)$-modules we just obtained.
Firstly, we have the following

\begin{theorem}Let $a,a',\lambda_i,\lambda_i'\in \C^*, 1\le i\le n.$ Then
$$\Omega_{1-a}(\lambda_1, \lambda_2,...\lambda_n)\cong\Omega_{1-a'}(\lambda_1', \lambda_2',...\lambda_n')$$
if and only if $a=a', \lambda_i=\lambda_i', 1\le i\le n.$
\end{theorem}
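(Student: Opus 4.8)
The plan is to prove both directions, with the sufficiency being immediate and the necessity requiring us to extract the parameters $a$ and $\lambda_i$ from the module structure in an intrinsic, basis-free way. For sufficiency, if $a=a'$ and $\lambda_i=\lambda_i'$ for all $i$, then the two modules have literally the same defining action (3.1), so the identity map is an isomorphism. The whole content is therefore in the necessity.

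\smallskip

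For necessity, suppose $\psi\colon \Omega_{1-a}(\lambda_1,\dots,\lambda_n)\to \Omega_{1-a'}(\lambda_1',\dots,\lambda_n')$ is an $\sl_{n+1}(\C)$-module isomorphism. My first step is to recover $a$ and $a'$ by looking at the action of the Cartan subalgebra $\h$. Each $\partial_i$ acts on the underlying polynomial ring $\C[\partial_1,\dots,\partial_n]$ by multiplication, so the operators $e_{ii}-e_{i+1,i+1}$ (for $1\le i\le n-1$) and $e_{nn}-e_{n+1,n+1}=\partial_n+\sum_j\partial_j$ act as explicit linear polynomials in the $\partial_j$. Since $\psi$ intertwines these semisimple-looking operators and they act by polynomial multiplication on an integral domain, $\psi$ must respect the associated filtration by total degree; in particular $\psi(1)$ is a nonzero scalar (the constant $1$ is characterized up to scalar as the unique common ``lowest-degree'' eigenvector for all of $\h$). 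Comparing how $e_{nn}-e_{n+1,n+1}$ acts on $1$ on each side — it acts as $(n+1)a$ on $1$ in $\Omega_{1-a}$ by a direct computation from (3.1) with $j=0$ — forces $(n+1)a=(n+1)a'$, hence $a=a'$.

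\smallskip

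Having fixed $a=a'$, my next step is to recover the $\lambda_i$. The parameters $\lambda_i$ enter (3.1) only through the ``shift'' factors $\lambda^j$ attached to nonzero $j\in\Z^n$, i.e. through the root vectors $e_{ij}$ with $i\ne j$ rather than through the Cartan part. I would apply suitable root vectors, for instance $e_{n+1,i}=t_i^{-1}\partial_i$ and $e_{i,n+1}=-t_i\sum_j\partial_j$, to the image $\psi(1)$ and to $1$, and compare eigenvalue data. Concretely, the composite $e_{i,n+1}e_{n+1,i}$ produces an operator whose ``lowest-degree'' coefficient is a specific scalar multiple of $\lambda_i$ times $\lambda_i^{-1}=1$ — more usefully, acting by $t_i t_j^{-1}=\lambda_i\lambda_j^{-1}\cdots$ type products lets one read off the ratios $\lambda_i/\lambda_i'$ by evaluating on the cyclic generator and using that $\psi(1)$ is a fixed nonzero scalar. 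Since $\psi$ commutes with these actions and $\psi(1)\ne0$, each comparison yields $\lambda_i=\lambda_i'$.

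\smallskip

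The main obstacle I anticipate is the second step: unlike the Cartan generators, the individual parameters $\lambda_i$ are not eigenvalues of single Lie algebra elements but appear multiplicatively inside the $t^j$-shifts, and the available root vectors in $\sl_{n+1}(\C)$ only give the \emph{combined} monomials $t_it_j^{-1}$, $t_i$, $t_i^{-1}$ — not arbitrary $t^j$. So I must be careful to choose a collection of generators whose actions separate all $n$ parameters individually. The cleanest route is to apply $\psi$-equivariance to the degree-lowering computation already used in the proof of Theorem~16 (the operator in (3.2)), whose leading behaviour on $\C[\partial_1]$ isolates $\lambda_1$, and then permute the roles of the indices via the $e_{i,i+1}$, $e_{i+1,i}$ to isolate each $\lambda_i$ in turn; tracking that $\psi$ sends the distinguished vector $1$ to a scalar and that these operators act without kernel on the relevant leading terms is what pins down each equality $\lambda_i=\lambda_i'$, completing the proof.
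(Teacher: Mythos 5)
Your overall skeleton --- show $\psi(1)$ is a nonzero scalar, then evaluate root vectors on the cyclic generator --- matches the paper's strategy, but your first step contains a genuine error that breaks the recovery of $a$. The Cartan subalgebra cannot see $a$ at all: in (3.1) with $j=0$ the twist term $(b-1)(u|j)$ vanishes, so every element of $\h$ acts on $\Omega_{1-a}(\lambda_1,\dots,\lambda_n)$ by multiplication by the same linear polynomial, independently of $a$ and of the $\lambda_i$. In particular $e_{nn}-e_{n+1,n+1}$ acts on $1$ by multiplication by $\partial_n+\sum_{j=1}^n\partial_j$, \emph{not} by the scalar $(n+1)a$; as $\C[\h]$-modules all the $\Omega_{1-a}(\lambda_1,\dots,\lambda_n)$ are the same free rank-one module, so no parameter is visible from $\h$ alone. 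For the same reason there are no $\h$-eigenvectors whatsoever in these modules (multiplication by $\partial_i$ has none), so your characterization of $1$ as ``the unique common lowest-degree eigenvector for $\h$'' is vacuous. What is true, and what the paper uses (Lemma 13), is simply that $\psi$ is $\C[\h]$-linear, hence $\psi(f)=f\,\psi(1)$ for all $f$; surjectivity then forces $\psi(1)$ to be a unit of $\C[\partial_1,\dots,\partial_n]$, i.e.\ a nonzero scalar $\gamma$.

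Once $\psi(1)=\gamma\in\C^*$ is in hand, \emph{all} the parameters, including $a$, must be read off from root vectors, and your second step, though aimed in the right direction, is left vague (you yourself note that $e_{i,n+1}e_{n+1,i}$ only produces $\lambda_i\lambda_i^{-1}=1$), while the proposed detour through the operator (3.2) from Theorem 16 is unnecessary. The computation that closes the argument is one line: $e_{n+1,j}\circ 1=\lambda_j^{-1}(\partial_j+a)$ in the source module, so $\C[\h]$-linearity gives $\psi(e_{n+1,j}\circ 1)=\gamma\lambda_j^{-1}(\partial_j+a)$, and comparing with $e_{n+1,j}\circ\gamma=\gamma\lambda_j'^{-1}(\partial_j+a')$ in the target, the degree-one coefficients force $\lambda_j=\lambda_j'$ and then the constant terms force $a=a'$. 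The paper's own proof is the same in spirit but ordered oppositely to yours: it first obtains $\lambda_i=\lambda_i'$ from the combinations $\lambda_i^{-1}e_{ij}-e_{n+1,j}$, which annihilate $1$ in the source module and hence must annihilate $\gamma$ in the target, and only afterwards obtains $a=a'$ from $\partial_i=\lambda_i e_{n+1,i}\circ 1-a$. In both correct versions $a$ comes from the root vectors, never from the Cartan; repairing your step 1 along these lines is essential, not cosmetic.
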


\begin{proof} The sufficiency is obvious. We need only to consider the necessity.
Let $\pi:\Omega_{1-a}(\lambda_1,...,\lambda_n)\rightarrow \Omega_{1-a'}(\lambda_1', \lambda_2',...\lambda_n')$ be an $\sl_{n+1}(\C)$-module isomorphism.
Since $\Omega_{1-a}(\lambda_1,...,\lambda_n)=\C[\h]\circ 1$, $\Omega_{1-a'}(\lambda'_1,...,\lambda'_n)=\C[\h]\circ \pi(1)$.
So $\pi(1)\in\C^*$. Denote $\pi(1)$ by $\gamma$.
From
$$\begin{aligned}0&=\pi((\lambda_i^{-1}e_{ij}-e_{n+1,j})\circ(1))
=(\lambda_i^{-1}e_{ij}-e_{n+1,j})\circ(\pi(1))\\
&=\lambda_j'^{-1}(\partial_j+a')(\frac{\lambda_i'}{\lambda_i}-1)\gamma ,\end{aligned}
$$ we deduce that $\lambda_i=\lambda_i', 1\le i\le n.$ From
$$\begin{aligned}\gamma \partial_i&=\partial_i\circ \gamma=\pi(\partial_i\circ 1)=\pi(\partial_i)=\pi(\lambda_ie_{n+1,i}\circ (1)-a)\\
&=\lambda_ie_{n+1,i}(\gamma)-a\gamma=\gamma(\partial_i+a'-a),\ 1\le i\le n,
\end{aligned}$$ we obtain $a=a'.$ This completes the proof.
\end{proof}

Note that in the theorem we do not need that $\Omega_{1-a}(\lambda_1, \lambda_2,...\lambda_n)$ and $\Omega_{1-a'}(\lambda_1', \lambda_2',...\lambda_n')$ are irreducible.

\

In the case that $\Omega_{1-a}(\lambda_1, \lambda_2,...\lambda_n)$ is reducible, we also have the following

\begin{corollary} Let $\lambda_i,\lambda_i'\in \C^*, a,a'\in -\frac{1}{n+1}\Z_+$. Then as $\mathfrak{sl}_{n+1}(\C)$-modules,
  $W_{1-a}(\lambda_1, \lambda_2,...\lambda_n)\cong W_{1-a'}(\lambda'_1, \lambda'_2,...\lambda'_n)$ if and only if $a=a', \lambda_i=\lambda_i', 1\le i\le n.$
\end{corollary}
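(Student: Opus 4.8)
The plan is to reduce everything to the observation that an $\sl_{n+1}(\C)$-module isomorphism is automatically linear over the Cartan, and that over the Cartan both modules are simply ideals in one polynomial ring. Recall that $\h=\span\{\partial_i\}$ is the common Cartan of $\sl_{n+1}(\C)$ and $\W_n$, so each $\partial_i$ is a $\C$-combination of the $e_{jj}-e_{j+1,j+1}$ and hence acts on our modules; taking $j=0$ in (3.1) shows $\partial_i$ acts by multiplication, so $U(\h)=\C[\partial_1,\dots,\partial_n]=:R$ and both $W_{1-a}(\lambda_1,\dots,\lambda_n)$ and $W_{1-a'}(\lambda'_1,\dots,\lambda'_n)$ are, as $R$-modules, nonzero ideals of the \emph{same} domain $R$ (note their defining generators $Y(j_1,\dots,j_n)$ do not involve the $\lambda_i$). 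Given an $\sl_{n+1}(\C)$-isomorphism $\phi$, restriction of scalars to $U(\h)=R$ makes $\phi$ an $R$-module isomorphism between two nonzero ideals. Since a nonzero ideal of a domain has rank one, tensoring with the fraction field $K=\C(\partial_1,\dots,\partial_n)$ shows $\phi$ is multiplication by a fixed $c\in K^{*}$, with $cW_{1-a}(\lambda)=W_{1-a'}(\lambda')$ inside $R$.

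The first genuine step is to prove $c\in\C^{*}$, and this is where I expect the only real subtlety. By Theorem 16 both ideals have finite $\C$-codimension in $R$, namely $\binom{m+n}{n}$ and $\binom{m'+n}{n}$, so their zero loci are finite sets of points, of codimension $n\ge 2$. Consequently no height-one prime $\mathfrak p$ of $R$ contains either ideal, so both localize to the unit ideal at every such $\mathfrak p$; the relation $cW_{1-a}(\lambda)=W_{1-a'}(\lambda')$ then gives $v_{\mathfrak p}(c)=0$ for all height-one $\mathfrak p$. As $R$ is a normal (factorial) domain, $R=\bigcap_{\operatorname{ht}\mathfrak p=1}R_{\mathfrak p}$, so $c$ and $c^{-1}$ lie in $R$, i.e. $c\in R^{*}=\C^{*}$. (Here $n\ge 2$ is essential, so that finite-colength ideals are trivial at height-one primes.)

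It remains to read off the parameters now that $\phi$ is multiplication by a constant $c\in\C^{*}$. Using (3.1) (compare the computation of $L_{n+1,n}$ in the proof of Lemma 14), the element $e_{n+1,n}=t_n^{-1}\partial_n$ acts on all of $\Omega_{1-a}$ as $\lambda_n^{-1}(\partial_n+a)\,T_n$, where $T_n$ is the substitution $f(\partial_1,\dots,\partial_n)\mapsto f(\partial_1,\dots,\partial_{n-1},\partial_n+1)$, while on $W_{1-a'}(\lambda')$ the same generator acts through $\lambda_n'^{-1}(\partial_n+a')\,T_n$. Since $c$ is a constant it commutes with $T_n$, so the equivariance $\phi(e_{n+1,n}\circ w)=e_{n+1,n}\circ\phi(w)$ forces the operator identity $\lambda_n^{-1}(\partial_n+a)=\lambda_n'^{-1}(\partial_n+a')$; comparing the coefficient of $\partial_n$ and the constant term yields simultaneously $\lambda_n=\lambda_n'$ and $a=a'$. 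Feeding $a=a'$ into the analogous identities for $e_{i+1,i}=t_{i+1}t_i^{-1}\partial_i$, which act by $\lambda_{i+1}\lambda_i^{-1}(\partial_i+a)$ times the shift $\partial_i\mapsto\partial_i+1,\ \partial_{i+1}\mapsto\partial_{i+1}-1$, gives $\lambda_{i+1}\lambda_i^{-1}=\lambda'_{i+1}\lambda_i'^{-1}$ for $1\le i\le n-1$; a downward induction starting from $\lambda_n=\lambda_n'$ then delivers $\lambda_i=\lambda_i'$ for all $i$. The sufficiency being immediate, this completes the argument.
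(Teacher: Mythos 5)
Your proof is correct, and it takes a genuinely different route from the paper's. The paper works inside the modules with the distinguished generator: it sets $\omega=\varrho(Y^1_{m+1})$, proves by an explicit contradiction computation with $\lambda_n^{-1}e_{n,1}-e_{n+1,1}$ that $\omega\in\C[\partial_1]$, extracts $\lambda_i=\lambda_i'$ from operators annihilating $Y^1_{m+1}$, then reuses the computation (3.2) to pin down $\omega=\gamma Y'^1_{m'+1}$ and runs an induction on $j$ with the elements $\lambda_ie_{n+1,i}-\lambda_1^{-1}\lambda_ie_{1,i}$ applied to $Y^1_{m+1-j}Y^i_j$ to rule out $m>m'$. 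You instead exploit the $\C[\h]$-module structure: since $\h\subseteq\sl_{n+1}(\C)$ acts by multiplication (this is (3.1) with $j=0$, and the Cartans of $\W_n$ and $\sl_{n+1}(\C)$ coincide), any isomorphism is $R$-linear for $R=\C[\partial_1,\dots,\partial_n]$, and both modules are finite-codimension ideals of $R$ whose generators $Y(j_1,\dots,j_n)$ do not involve the $\lambda_i$; an $R$-isomorphism of nonzero ideals of a domain is multiplication by some $c\in\operatorname{Frac}(R)^*$ (indeed $\phi(x)/x$ is independent of $x$, since $y\phi(x)=\phi(xy)=x\phi(y)$), and your height-one-prime argument is sound: if $W\subseteq\mathfrak p$ with $\operatorname{ht}\mathfrak p=1$ then $R/\mathfrak p$, of Krull dimension $n-1\geq 1$, would be a quotient of the finite-dimensional $R/W$, so $v_{\mathfrak p}(c)=0$ everywhere and normality of $R$ gives $c\in\C^*$. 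Your operator identities also check against (3.1): $e_{n+1,n}\circ f=\lambda_n^{-1}(\partial_n+a)f(\partial_1,\dots,\partial_{n-1},\partial_n+1)$ on all of $\Omega_{1-a}$, which with constant $c$ forces $\lambda_n^{-1}(\partial_n+a)=\lambda_n'^{-1}(\partial_n+a')$, hence $\lambda_n=\lambda_n'$ and $a=a'$, and the $e_{i+1,i}$ identities plus downward induction give the remaining $\lambda_i=\lambda_i'$ (you could even shorten this by using $e_{n+1,i}=t_i^{-1}\partial_i$ for each $i$ directly, getting $\lambda_i=\lambda_i'$ and $a=a'$ in one stroke). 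What your approach buys: it is shorter, avoids the delicate claim-plus-induction computations, proves slightly more (any $\C[\h]$-linear isomorphism between these submodules is already a scalar), and isolates exactly where $n\geq 2$ enters --- for $n=1$ a nonconstant $c=(\partial+a')/(\partial+a)$ really does carry one such ideal onto another, so positive-dimensionality of height-one primes is essential. The paper's computation, by contrast, is elementary and self-contained in the module formulas, invoking no commutative algebra (localization, UFD, normality).
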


\begin{proof} The sufficiency is clear. Now we consider the necessity.
  Let $a=-\frac{m}{n+1}, a'=-\frac{m'}{n+1}$ for $m\ge m'\in\Z_+$. Assume that $\varrho: W_{1-a}\rightarrow W_{1-a'}$ is an $\mathfrak{sl}_{n+1}(\C)$-module isomorphism.  Denote $\varrho(Y_{m+1}^1)=\omega\in W_{1-a'}$.

\

{\bf Claim.} $\omega\in \C[\partial_1]$.

To the contrary, assume $\omega\notin \C[\partial_1].$ Then some other $\partial_i$ has positive degree in $\omega$. We may assume $\partial_n$ has degree $\beta>0$. Write $\omega$ in the form $\omega=\sum_{\alpha=0}^{\beta}\psi_{\alpha}(\partial_1,\cdots,\partial_{n-1})\partial_n^{\alpha}$, where
$\psi_{\alpha}\in \C[\partial_1,\cdots,\partial_{n-1}]$ and $\psi_{\beta}\ne 0.$ Then
$$\begin{aligned}0&=\varrho((\lambda_n^{-1}t_nt_1^{-1}\partial_1-t_1^{-1}\partial_1)(Y_{m+1}^1))
=(\lambda_n^{-1}t_nt_1^{-1}\partial_1-t_1^{-1}\partial_1)\varrho(Y_{m+1}^1)\\
&=\lambda_1'^{-1}(\partial_1+a')\sum_{\alpha=0}^{\beta}\psi_{\alpha}
(\partial_1+1,\partial_2,\cdots,\partial_{n-1})(\frac{\lambda_n'}{\lambda_n}
(\partial_n-1)^{\alpha}-\partial_n^{\alpha})\neq 0,
\end{aligned}$$ which is absurd. Thus $\omega\in \C[\partial_1]$, as desired.

Since  $$\begin{aligned}0&=\varrho((\lambda_i^{-1}e_{i1}-e_{n+1,1})\circ(Y_{m+1}^1))
=(\lambda_i^{-1}e_{i1}-e_{n+1,1})\circ \varrho(Y_{m+1}^1)\\
&=(\frac{\lambda'_i}{\lambda_i}-1)e_{n+1,1}\circ(\varrho(Y_{m+1}^1)),\ 1<i\le n,\end{aligned}$$ and $e_{n+1,1}\circ(\varrho(Y_{m+1}^1))\ne 0,$ we deduce that $\lambda_i=\lambda_i', 1< i\le n. $

The same arguments used above can deduce that $\varrho(Y_{m+1}^j)\in \C[\partial_j]$ and $\lambda_j=\lambda_j', 1\le j\le n.$

In (3.2) replacing $W$ with $W_{1-a}$, $f$ with   $Y^1_{m+1}$, the result is $0$ since elements in $W_{1-a}$ has at least degree $m+1$. So, replacing $f$ in (3.2) with  $\omega$, we should get zero, which implies that  $\omega=\gamma Y'^1_{m'+1}$, where $\gamma\in \C^*$ and
$Y'^1_{m'+1}=(\partial_1+a')(\partial_1+a'+1)\cdots(\partial_1+a'+m').$ Since
$$\begin{aligned}\varrho(Y_{m-j}^1&Y^i_{j+1})
=\varrho(\frac{1}{m+1-j}(\lambda_ie_{n+1,i}
-\lambda_1^{-1}\lambda_ie_{1i})\circ(Y_{m+1-j}^1Y_j^i))\\
&=\frac{1}{m+1-j}(\lambda_ie_{n+1,i}
-\lambda_1^{-1}\lambda_ie_{1i})\circ\varrho(Y_{m+1-j}^1Y_j^i), 1< i\le n,
\end{aligned}$$  by induction on $j$ we can deduce that $\varrho(Y^1_{m+1-j}Y^i_{j})=\gamma_jY'^1_{m'+1-j}Y'^i_{j}, 0\le j\le m'+1,$ where $\gamma_j\in \C^*.$ If $m>m',$ then
$$\begin{aligned}0&\ne \varrho(\frac{1}{m+1-(m'+1)}(\lambda_ie_{n+1,i}
-\lambda_1^{-1}\lambda_ie_{1i})\circ(Y_{m+1-(m'+1)}^1Y_{m'+1}^i))\\
&=\frac{\gamma_{m'+1}}{m-m'}(\lambda_ie_{n+1,i}
-\lambda_1^{-1}\lambda_ie_{1i})\circ(Y'^1_0Y'^i_{m'+1})=0,\end{aligned}$$ which is impossible. Thus $m=m'$ and $a=a'.$ This proves the theorem.
\end{proof}

At last, we like to compare our irreducible $\mathfrak{sl}_{n+1}(\C)$-modules just constructed in this section
with other known non-weight $\mathfrak{sl}_{n+1}(\C)$-modules.
With respect to a fixed triangular decomposition of $\mathfrak{sl}_{n+1}(\C)$ as in (1.2), denote by
$\mathfrak{b}=\mathfrak{h}\oplus \mathfrak{n}_{+}$ the fixed {\emph
{Borel subalgebra}} of $\mathfrak{sl}_{n+1}(\C)$. Let
$\mathfrak{p}\supset \mathfrak{b}$ be a {\emph {parabolic
subalgebra}} of $\mathfrak{sl}_{n+1}(\C)$. Denote by $\mathfrak{n}'$
the nilpotent radical of $\mathfrak{p}$ and by $\mathfrak{u}$ the
{\emph {Levi factor}} of $\mathfrak{p}$. Then $ \mathfrak{n}'\subset
\mathfrak{n}_{+}$ and $\mathfrak{p}=\mathfrak{u}\oplus
\mathfrak{n'}$. Let $V$ be a simple $\mathfrak{p}$-module,
annihilated by $\mathfrak{n}'$. The induced module
$$M_{\mathfrak{p}}(V)=U(\mathfrak{g})\otimes_{U(\mathfrak{p})}V$$
is called the {\emph {generalized Verma module}}   of $\mathfrak{sl}_{n+1}(\C)$ associated with $\mathfrak{p}$
and $V$, see \cite{KM}. If $\mathfrak{p}=\mathfrak{b},$ then
$\mathfrak{u}=\mathfrak{h}$, $\mathfrak{n}'=\mathfrak{n}_+$, and
$M_{\mathfrak{p}}(V)$ is a usual {\emph {Verma module}} over
$\mathfrak{sl}_{n+1}(\C)$.

 In \cite{Ko, Mc1,Mc2},   irreducible Whittaker modules over
 finite-dimensional simple Lie algebras were determined.

On  our irreducible modules $\Omega_{1-a}(\lambda_1, \lambda_2,...\lambda_n)$ and $W_{1-a}(\lambda_1, \lambda_2,...\lambda_n)$ over $\mathfrak{sl}_{n+1}(\C)$, the action of any $e_{i,j}$ with $i\ne j$ is not locally finite, and these modules are finitely generated free $\C[\h]$-modules. Thus they are not
generalized Verma module or Whittaker modules over $\mathfrak{sl}_{n+1}(\C)$.
Therefore, we can conclude that  irreducible $\mathfrak{sl}_{n+1}(\C)$-modules $\Omega_{1-a}(\lambda_1, \lambda_2,...\lambda_n)$ and $W_{1-a}(\lambda_1, \lambda_2,...\lambda_n)$  are new.

\

\noindent {\bf Acknowledgement.} The second author is partially
supported by NSF of China (Grant 11271109), NSERC, and University Research Professor grant at Wilfrid Laurier University.
The authors thank Prof. R. Lu for a lot of helpful discussions during the
preparation of the paper. We thank Prof. V. Mazorchuk to send us the paper \cite{N} right after we have finished the present paper, where in \cite{N} a complete classification of all irreducible $\mathfrak{sl}_{n+1}(\C)$-modules on $\C[\h]$ has been obtained.

\vspace{10mm}

\noindent H. Tan: Department of Applied Mathematics, Changchun University of Science and Technology, Changchun, Jilin,
130022, P.R. China.
and College of Mathematics and Information Science,
Hebei Normal (Teachers) University, Shijiazhuang, Hebei, 050016 P.
R. China. Email: hjtan9999@yahoo.com

\vspace{0.2cm}
 \noindent K. Zhao: Department of Mathematics, Wilfrid
Laurier University, Waterloo, ON, Canada N2L 3C5, and College of
Mathematics and Information Science, Hebei Normal (Teachers)
University, Shijiazhuang, Hebei, 050016 P. R. China. Email:
kzhao@wlu.ca


\begin{thebibliography}{99999}

\bibitem[BF]{BF}
Y.~Billig, V.~Futorny, Classification of simple $W_n$- modules with finite-dimensional weight spaces, arXiv:1304.5458v1.
\bibitem[Bl]{Bl}
R.E.~Block, The irreducible representations of the Lie algebra $\mathfrak{sl}(2)$ and of the Weyl algebra, Adv. Math. 39(1981)
69-110.
\bibitem[BM]{BM} P.~Batra, V.~Mazorchuk, Blocks and modules for Whittaker pairs, J. Pure Appl. Algebra,
215(2011) 1552-1568.

\bibitem[BZ]{BZ} Y.~Billig, K.~ Zhao, Weight modules over exp-polynomial Lie algebras, J. Pure Appl. Algebra,
191(2004) 23-42.
\bibitem[CGZ]{CGZ}
H. Chen, X. Guo, K. Zhao, Tensor product weight modules over the
Virasoro algebra, J. Lond. Math. Soc.(2013), 88 (3): 829-844.
\bibitem[CM]{CM} C.~Conley, C.~Martin, A family of irreducible representations
of Witt Lie algebra with infinite-dimensional weight spaces, Compos. Math.,
128(2)(2001) 152-175.
\bibitem[DFO]{DFO}
Y. Drozd, V.~Futorny, S.~Ovsienko, Harish-Chandra subalgebras and Gelfand-Zntlin modules, in: Finte-
Dimensional Algebras and Related Topics, Ottawa, ON, 1992, in: NATO Adv. Sci. Inst. Ser. C Math. Phys. Sci.,
vol. 424, Kluwer Acad. Publ., Dordrecht, 1994, 79-93.
\bibitem[E1]{E1} S.~Eswara Rao, Irreducible representations of
the Lie-algebra of the diffeomorphisms of a $d$-dimensional torus,
J. Algebra  {\bf 182}  (1996),  no. 2, 401--421.
\bibitem[E2]{E2}
S.~Eswara Rao, Partial classifications of the Lie algebra of the diffeomorphisms of d-dimensional torus, J. Math. Phys.
45(2004), 3322-3333.
\bibitem[GLZ]{GLZ} X. Guo,  G. Liu, and K. Zhao, Irreducible Harish-Chandra modules
over extended Witt algebras,  Arkiv f\"{o}r matematik, DOI: 10.1007/s11512-012-0173-9.
\bibitem[GLZ2]{GLZ2} X. Guo, R. Lu and K. Zhao, Fraction
representations and highest-weight-like representations of the
Virasoro algebra,   J. Algebra, 387 (2013), 68-86.
\bibitem[HWZ]{HWZ} J.~Hu, X.~Wang, K.~Zhao, Verma
modules over generalized Virasoro algebras $\text{Vir}[G]$, J. Pure
Appl. Alg. 177(2003) 61-69.
\bibitem[Ka]{Ka}
V.G.~Kac, Some problems of infinite-dimensional Lie aglebras and
their representations, Lecture Notes in Mathematics, 993,
(1982)117-126. Berlin, Heidelberg, New York: Springer.
\bibitem[KM]{KM}O. Khomenko, V. Mazorchuk, Generalized Verma Modules Induced from
$\sl_2$ and Associated Verma Modules, J. Algebra 242 (2001),
561-576.
\bibitem[Ko]{Ko} B.~Kostant, On Whittaker vectors and
representation theory, Invent. Math. 48(1978) 101-184.
\bibitem[LGZ]{LGZ} R.~L{\"u}, X.~Guo, K.~Zhao, Irreducible modules over the Virasoro algebra, Doc. Math. {\bf 16}
(2011) 709--721.
\bibitem[LLZ]{LLZ} G. Liu, R.~L{\"u},  K.~Zhao,
A  class of simple weight Virasoro modules, arXiv:1211.0998.
\bibitem[LZ1]{LZ1} R.~L{\"u},  K.~Zhao, Irreducible  Virasoro modules from irreducible Weyl modules, arXiv:1209.3746.
\bibitem[LZ2]{LZ2} R.~L{\"u},  K.~Zhao,  A family of simple weight modules over the Virasoro algebra, arXiv:1303.0702.
\bibitem[M1]{M1}
O.~Mathieu, Classification of Harish-Chandra modules over the
Virasoro Lie algebra, Invent. Math.  107(2)(1992) 225--234.
\bibitem[M2]{M2}
O.~Mathieu, Classification of irreducible weight
modules, Ann. Inst. Fourier 50(2000) 537-592.
\bibitem[MS]{MS}
V.~Mazorchuk, C.~Stroppel,
Categorification of (induced) cell modules and the
rough structure of generalised Verma modules, Adv. Math. 219(2008) 1363-1426.
\bibitem[MW]{MW} V.~Mazorchuk, E. Weisner, Simple Virasoro modules induced from codimension one subalgebras of
the positive part,  Proc. Amer. Math. Soc., in press, arXiv:1209.1691,
\bibitem[MZ1]{MZ1}
V.~Mazorchuk, K.~Zhao, Classification of simple weight Virasoro modules
with a finite-dimensional weight space, J.~Algebra 307 (2007) 209-214.
\bibitem[MZ2]{MZ2} V.~Mazorchuk, K.~Zhao, Simple Virasoro modules which are locally finite
over a positive part, Selecta  Math. New Ser.,
DOI 10.1007/s00029-013-0140-8.
\bibitem[Mc1]{Mc1}
E.~McDowell, On Modules Induced from Whittaker Modules, J. Algebra 96(1985) 161-177.
\bibitem[Mc2]{Mc2}
E.~McDowell, A Module Induced from a Whittaker Module, Proc.  Amer. Math. Soc.,
118, 1993.
\bibitem[N]{N}J. Nilsson, Simple $\sl_{n+1}$-module structures on $U(\h$), preprint, Dec.17, 2013.
\bibitem[OW]{OW}
M.~Ondrus, E.~Wiesner, Whittaker Modules for the Virasoro Algebra,
J. Algebra Appl., 8(2009) 363-377.
\bibitem[Sh]{Sh}  G.~Shen, Graded modules of graded Lie algebras of
Cartan type. I. Mixed products of modules, Sci. Sinica Ser. A {\bf
29}  (1986),  no. 6, 570--581.
\bibitem[TZ1]{TZ1}
 H.~ Tan, K.~Zhao, Irreducible modules from tensor products, arXiv:1301.2131.
\bibitem[TZ2]{TZ2}
 H.~ Tan, K.~Zhao, Irreducible modules from tensor products (II), J. Algebra, 394(2013) 357-373.
\bibitem[Zh]{Zh}
H.~Zhang, A class of representations over the Virasoro algebra, J.
Algebra, 190(1997)  1-10.
\bibitem[Z]{Z}K. Zhao,  Weight modules over generalized Witt algebras
with $1$-dimensional weight spaces,  Forum Math., Vol.16(2004),
725-748.
\end{thebibliography}
\end{document}